\numberwithin{equation}{section}
\newtheorem{theorem}{Theorem}[section]
\newtheorem{proposition}[theorem]{Proposition}
\newtheorem{corollary}[theorem]{Corollary}
\newtheorem{lemma}[theorem]{Lemma}
\newtheorem{definition}[theorem]{Definition}
\newtheorem{remark}[theorem]{Remark}
\newcommand{\R}{{\mathbb R}}
\def\lpu{\mathcal{L}_u}
\def\grad{\nabla}
\def\la{\lambda}
\newcommand{\refe}[1]{{\rm (\ref{#1})}}
\newcommand{\Mm}{\mathcal{M}^-}
\newcommand{\Mp}{\mathcal{M}^+}
\newcommand{\Mma}{\mathcal{M}^-_{\alpha, \beta}}
\newcommand{\Mpa}{\mathcal{M}^+_{\alpha, \beta}}
\begin{document}
\parindent=0pt

\title[Symmetry for viscosity solutions of fully nonlinear equations]{Symmetry and spectral properties for viscosity solutions of fully nonlinear equations}

\author[ I. Birindelli, F. Leoni \& F. Pacella]
{ Isabeau Birindelli, Fabiana Leoni \& Filomena Pacella}

\address{Dipartimento di Matematica\newline
\indent Sapienza Universit\`a  di Roma \newline
 \indent   P.le Aldo  Moro 2, I--00185 Roma, Italy.}
 \email{isabeau@mat.uniroma1.it}
\email{leoni@mat.uniroma1.it}
\email{pacella@mat.uniroma1.it}

\keywords{Fully nonlinear elliptic equations, Pucci's extremal operators, maximum principle, principal eigenvalues, symmetry of solutions, nodal eigenfunctions}
\subjclass[2010]{ 35J60}
\begin{abstract}
We study symmetry properties of viscosity solutions of fully nonlinear uniformly elliptic equations. We show that if $u$ is a viscosity solution of a rotationally invariant equation of the form $F(x,D^2u)+f(x,u)=0$, then the operator $\mathcal{L}_u=\Mp+\frac{\partial f}{\partial u}(x,u)$, where $\Mp$ is the Pucci's sup--operator, plays the role of the linearized operator at $u$. In particular, we prove that if $u$ is a solution in a radial bounded domain, if $f$ is convex in $u$ and if the principal eigenvalue of $\mathcal{L}_u$ (associated with positive eigenfunctions) in any half domain is nonnegative, then $u$ is foliated Schwarz symmetric. We  apply our symmetry results to obtain bounds on the spectrum and to deduce properties of possible nodal eigenfunctions for the operator $\Mp$.
 \end{abstract}
\maketitle

\section{Introduction}\label{intro}

This paper studies symmetry properties of solutions of fully nonlinear equations related to  
spectral properties of what,  improperly, will be  called the linearized operator.
The question we would like to answer is, which symmetry features of the domain and the operator are 
inherited by the
viscosity solutions of the homogeneous Dirichlet problem
\begin{equation}\label{eq1}
\left\{\begin{array}{c}
-F(x, D^2u)= f(x,u)\quad \hbox{ in } \Omega\, ,\\[1ex]
u=0 \quad \hbox{ on } \partial \Omega\, ,
\end{array}\right.
\end{equation}
where $\Omega\subset \R^n$, $n\geq 2$, is a  bounded domain and $F$ is a fully nonlinear uniformly 
elliptic operator. 

Starting with Alexandrov \cite{Al} and after the fundamental works of Serrin \cite{Se} and Gidas, Ni,
Nirenberg \cite{GNN} most results on symmetry of solutions rely on the moving plane method. It is 
impossible to even start mentioning all the results obtained via that method,  be they for semilinear, 
quasilinear or fully nonlinear equations. Let us just mention here the results obtained for positive solutions of fully nonlinear equations by Da Lio and Sirakov \cite{DLS}, Birindelli and Demengel \cite{BDov} and Silvestre and Sirakov \cite{SS}. 

For the purpose of this introduction, let us
emphasise its limit of application. Indeed, as it is well known by the experts, the moving plane method cannot be applied if the domain is not convex in the symmetry direction, say e.g. if $\Omega$ is an annulus, or if the nonlinear term $f(x,u)$ does not have the right monotonicity in the $x$--variable (see e.g. \cite{PR} for several counterexamples).  The moving plane method  does not apply also to sign changing solutions. 
Of course, even when $\Omega$ is  a ball and $F$ is the Laplacian, one cannot expect sign changing solutions to be radially symmetric, as it is clear exhibited by the fact that there are non radial eigenfunctions. In these cases, some other notion of symmetry is required.

In a more philosophical  understanding, the moving plane method is the tool that allows to extend  the symmetry of the principal eigenfunctions, which are the only constant sign eigenfunctions, to all positive solutions of nonlinear equations. 
It is quite natural to wonder if this analogy can be continued, i.e. 
under which conditions can one expect solutions of nonlinear equations to share the same 
symmetry of other eigenfunctions, in particular of the "second" eigenfunctions. 

Indeed, in balls or annuli, linear operators of the type $\Delta +c(x)$ do have nodal eigenfunctions, in particular the second eigenfunctions, which are symmetric though they are not  radial. For problems in non convex domains, one can imagine, and sometimes observe numerically, that even some positive solutions, like least-energy solutions, inherit only part of the symmetry of the domain,  for instance, axial symmetry. In all these cases, if the domain is rotationally symmetric, the solutions are proved to be foliated Schwarz symmetric, according to the following

\begin{definition}\label{fss} Let $B$ be a ball or an annulus in $\R^n$, $n\geq 2$. A function $u:\overline B\to \R$ is  foliated Schwarz symmetric if there exists a unit vector $p\in S^{n-1}$ such that $u(x)$ only depends on $|x|$ and $\theta= \arccos \left( \frac{x}{|x|}\cdot p\right)$, and $u$ is non increasing with respect to $\theta\in (0,\pi)$ .
\end{definition}

In other words, a foliated Schwarz symmetric function is axially symmetric with respect to the axis $\R p$ and non increasing with respect to the polar angle $\theta$. Note that a radially symmetric function is in particular foliated Schwarz symmetric with respect to any direction $p$, and for a not radial foliated Schwarz symmetric function the symmetry direction $p$ is unique. 

In the last decades, some work has been devoted to understanding  under which conditions solutions of semilinear elliptic equations are foliated Schwarz symmetric. This line of research, which strongly relies on the maximum principle,  was started by Pacella \cite{P} and then developed  by Pacella and Weth \cite{PW} and Gladiali, Pacella and Weth \cite{GPW}, see also Pacella, Ramaswamy \cite{PR} and Weth \cite{W}. In the semilinear elliptic case, by using symmetrization techniques, some results about foliated Schwarz symmetry of minimizers of associated functionals were obtained by Smets and Willem \cite{SW}, Bartsch, Weth and Willem \cite{BWW} and Brock \cite{Br}.

Let us recall some results occurring when the diffusion operator is the Laplacian,  i.e.
for solutions of
\begin{equation}\label{linear}
\left\{\begin{array}{c}
\Delta u+f(|x|,u)=0 \quad \mbox{in}\ B\, ,\\[1ex]
u=0\quad  \mbox{on}\ \partial B\, .
\end{array}
\right.
\end{equation}
Under some convexity hypotheses on $f$, it was proved in \cite{P} and \cite{PW} that a sufficient condition for the foliated Schwarz symmetry of a solution $u$ of \refe{linear} is that the first eigenvalue $\la_1(\mathcal{L}_u, B(e))$ of the linearized operator $\mathcal{L}_u=\Delta +\frac{\partial f}{\partial u}(|x|, u)$ at the solution $u$, in the half domain $B(e)=\{ x\in B\, :\, x\cdot e>0\}$ is nonnegative, for a direction $e\in S^{n-1}$. 

Furthermore, in \cite{GPW, P, PW}, it was proved that 
$\la_1(\mathcal{L}_u, B(e))\geq 0$ for some direction $e$ if $m(u)\leq n$, where $n$ is the dimension and $m(u)$ is the Morse index of the solution $u$. We recall that
the Morse index $m(u)$ is defined as the maximal dimension of a subspace of $C^{1}_0(B)$ where the quadratic form
$$Q(\varphi)=\frac{1}{2}\int_B |\grad \varphi|^2dx -\int_B \frac{\partial f}{\partial u}(|x|, u)\varphi^2dx $$
is negative definite, or,  equivalently,   as the number of negative eigenvalues 
of the linearized operator ${\mathcal L_u}$.

In this line of thought,  the first question  is: what plays the role of the linearized operator for the fully nonlinear problem \refe{eq1}?

In the whole paper we will suppose that $F$ is uniformly elliptic (see condition \refe{ue}) and 
Lipschitz continuous in $x$ (see condition \refe{lip}). Let us recall that uniform ellipticity is equivalent to
$$
\Mma (M-N)\leq F(x,M)-F(x,N)\leq \Mpa (M -N)\quad \forall\, x\in \Omega\, ,\ M, N\in {\mathcal S}_n\, ,
$$
where $\Mma$ and $\Mpa$ are the Pucci's extremal operators with ellipticity constants $0<\alpha\leq\beta$
(for a precise definition,  see Section 2) and ${\mathcal S}_n$ is the set of $n\times n$ symmetric matrices.
This, in order, will imply 
(see Lemma \ref{diffe})  that any "derivative" $v$ of $u$ will satisfy 
$$
-\Mpa (D^2 v)\leq \frac{\partial f}{\partial u}(x, u)\, v\quad \hbox{ in } \Omega\, ,
$$
$v$ being only  a viscosity subsolution, and not a solution as in the semilinear case.
This suggests to define as "linearized" operator the fully nonlinear operator
 $$
\mathcal{L}_u(v) \, :\, = \Mpa(D^2v) +\frac{\partial f}{\partial u}(x,u)\, v\, .
$$
In this framework, it doesn't seem possible to associate a quadratic form to $\mathcal{L}_u$ in 
order to define a notion of Morse index. On the other hand, the use of the eigenvalues to define it would require the knowledge of the spectrum of the 
operator $\mathcal{L}_u$. 
In analogy with the linear case, see \cite{BNV, BD, BEQ, II}, in a domain $D$ one may define, through the maximum principle, the principal eigenvalues $\lambda_1^+=\lambda_1^+(\mathcal{L}_u , D)$ and $\lambda_1^-=\lambda_1^-(\mathcal{L}_u , D)$.
 Associated with these values,   there are principal eigenfunctions $\phi_1^{\pm}\in C(\overline D)\cap C^2(D)$, defined up to positive   constant multiples, which satisfy respectively
$$
\left\{
\begin{array}{c}
-\mathcal{L}_u  [\phi_1^+]= \lambda_1^+\, \phi_1^+\quad \hbox{in } D\\[2ex]
\phi_1^+ >0 \ \hbox{in } D\, ,\ \phi_1^+=0\ \hbox{on } \partial D
\end{array}\right.
\quad \mbox{and}\quad 
\left\{
\begin{array}{c}
-\mathcal{L}_u  [\phi_1^-]= \lambda_1^-\, \phi_1^-\quad \hbox{in } D\\[2ex]
\phi_1^- <0 \ \hbox{in } D\, ,\ \phi_1^-=0\ \hbox{on } \partial D.
\end{array}\right.
$$
However, besides the principal eigenvalues and their corresponding eigenfunctions, 
not much is known about other eigenvalues.  A completeness result of a spectral basis  
is known only for radial eigenfunctions, see Ikoma and Ishii \cite{II},  but the sign of radial eigenvalues is not relevant for the foliated Schwarz symmetry.

Nonetheless we obtain some symmetry results, by analyzing  the sign of the principal eigenvalue of $\mathcal{L}_u$ in half domains.

In order to describe our results, let us   introduce a few notations that will always be valid in the sequel.
$B$ will always denote a bounded radial domain, that is a ball or an annulus centered at the origin. 
For any unit vector $e\in S^{n-1}$, we further denote by  
$H(e)=\{ x\in \R^n\, :\, x\cdot e=0\}$ the hyperplane orthogonal to $e$ and by 
$B(e)=\{ x\in B\, :\, x\cdot e>0\}$ the open half domain on the side of $H(e)$ which contains $e$. 
Moreover, we indicate with $\sigma_e : \R^n\to \R^n$ the reflection with respect to $H(e)$, that is 
the map $\sigma_e(x)=x- 2 (x\cdot e)e$.  Accordingly, for any domain $\Omega$, we will set $\Omega(e)= \{ x\in \Omega\, :\, x\cdot e>0\}$.

A first simple, but useful,   result that we get is a sufficient condition for the symmetry of a viscosity solution $u$ of \refe{eq1} in a domain $\Omega$ symmetric with respect to a certain hyperplane $H(e)$. More precisely, we will show that if $f(x,s)$ is convex in the $s$--variable, then the positivity of the principal eigenvalues $\la_1^+(\mathcal{L}_u, \Omega(\pm e))$ in both domains $\Omega(\pm e)$ implies that $u(x)=u(\sigma_e(x))$ for all $x\in \Omega$, see Proposition \ref{prop1}.

Next, our main result, concerning the foliated Schwarz symmetry of viscosity solutions of \refe{eq1}, is
\begin{theorem}\label{lam0}Suppose that $F$ is invariant with respect to any reflection $\sigma_e$ and  by rotations. 
Let $u$ be a viscosity solution of problem \refe{eq1}, with $\Omega=B$ and $f(x,\cdot)=f(|x|,\cdot)$  convex in $\R$. If there exists $e\in S^{n-1}$ such that  
$$\lambda_1^+(\mathcal{L}_u, B(e))\geq 0,$$ 
then $u$ is foliated Schwarz symmetric.
\end{theorem}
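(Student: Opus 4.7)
The plan is to follow the semilinear strategy developed by Pacella--Weth \cite{PW, GPW}, with the fully nonlinear operator $\lpu = \Mpa + f_u(x,u)$ taking the role of the usual linearization. The proof splits into two stages: first, establish a reflection monotonicity of $u$ with respect to the hyperplane $H(e)$ furnished by the hypothesis; second, upgrade this one-directional information to full foliated Schwarz symmetry through a rotation/continuity argument on the sphere $S^{n-1}$.

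For the first stage, set $u_e := u\circ\sigma_e$ and $w := u - u_e$. Since $B$ is radial and both $F$ and $f(|\cdot|,\cdot)$ are $\sigma_e$-invariant, $u_e$ is also a viscosity solution of \refe{eq1}; moreover $w = 0$ on $\partial B(e)$, since this boundary consists of parts of $\partial B$ and of $H(e)$, where $u$ and $u_e$ agree. Uniform ellipticity of $F$ (in the spirit of Lemma~\ref{diffe}) yields
\[
-\Mpa(D^2 w)\;\le\;-F(x,D^2 u)+F(x,D^2 u_e)\;=\;f(x,u)-f(x,u_e)
\]
in the viscosity sense, and convexity of $f(x,\cdot)$ gives $f(x,u)-f(x,u_e)\le f_u(x,u)\,w$. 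Combining these, $-\lpu[w]\le 0$ in $B(e)$. The hypothesis $\lambda_1^+(\lpu,B(e))\ge 0$ activates the refined maximum principle (the case $\lambda_1^+=0$ requiring the principal eigenfunction dichotomy), giving $w\le 0$ in $B(e)$. A strong maximum principle then produces the alternative: either $u\equiv u_e$, so $u$ is symmetric across $H(e)$, or $u<u_e$ strictly on the $+e$-side.

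For the second stage, I would invoke the characterization (see e.g.\ \cite{PW, GPW}) that $u$ is foliated Schwarz symmetric if and only if, for every pair of orthogonal unit vectors $e_1,e_2\in S^{n-1}$, at least one of the inequalities $u\ge u\circ\sigma_{e_i}$ in $B(e_i)$ holds. Taking $p := -e$ as the candidate axis, argue by contradiction: if the characterization fails for some orthogonal pair, rotate $e$ continuously toward one of $e_1,e_2$, exploiting the rotational invariance of $F$ and the radiality of $B$ and $f$, and use the continuous dependence of $\lambda_1^+(\lpu,B(\cdot))$ on the domain to locate a direction where the first-stage dichotomy reapplies, producing the missing monotonicity and contradicting the assumed failure.

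The main obstacle lies in this second stage: the hypothesis gives spectral control along only a single direction, whereas foliated Schwarz symmetry encodes reflection information along a whole sphere of directions. Transporting the spectral bound across $S^{n-1}$ by means of the rotational invariance and the continuity of $\lambda_1^+$ with respect to the domain is the delicate technical point. A subsidiary difficulty lies in the borderline case $\lambda_1^+ = 0$, where the standard weak maximum principle is not directly available, and one must use its refined version together with a Hopf-type boundary lemma for viscosity solutions of Pucci-type inequalities, keeping track of the possible contribution of the principal eigenfunction $\phi_1^+$.
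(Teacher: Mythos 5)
Your stage 1 is essentially the paper's, up to one point you should fix: when $\lambda_1^+(\lpu,B(e))=0$, Proposition~\ref{key} does \emph{not} give $w\le 0$; it gives the true dichotomy that either $w\le 0$ or $w>0$ in $B(e)$ (and in the latter case $-\lpu[w]=0$). The paper keeps both branches and runs the subsequent rotating-plane argument starting from whichever cap $B(\pm e)$ carries the strict inequality. Your write-up drops the $w>0$ branch, which is not justified.

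The genuine gap is in stage 2. You correctly identify the obstacle---the hypothesis controls $\lambda_1^+$ in the single cap $B(e)$, while foliated Schwarz symmetry needs information in every direction---but the remedy you propose (rotate $e$ and invoke continuity of $\lambda_1^+(\lpu,B(\cdot))$ to ``locate a direction where the dichotomy reapplies'') does not close the argument. Continuity in the direction tells you nothing about the sign of $\lambda_1^+(\lpu,B(e'))$ for $e'\ne e$: since $\lpu$ depends on $u$, which need not be rotationally symmetric, those eigenvalues can well be negative, and the hypothesis gives no control there. The paper's mechanism is different and is the heart of the proof: starting from the cap where $w_e<0$, one applies the \emph{rotating plane method} inside a fixed plane $\Pi\ni e$, defining $\theta'=\sup\{\tilde\theta : w_{e(\theta)}<0 \text{ in } B(e(\theta))\ \forall \theta\in[0,\tilde\theta]\}$, and shows $w_{e(\theta')}\equiv 0$ (so $u$ is symmetric about $H(e')$, $e'=e(\theta')$) by a compactness/small-measure maximum-principle step. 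Then Hopf's lemma along the moving hyperplanes pins down the sign of $u_\theta$ on $\partial\bigl(B(e)\cap B(e')\bigr)$, and the crucial observation is that $B(e)\cap B(e')$ is a \emph{proper} subdomain of $B(e)$, so Proposition~\ref{prl}(i) forces $\lambda_1^+(\lpu,B(e)\cap B(e'))>\lambda_1^+(\lpu,B(e))\ge 0$, i.e.\ \emph{strict} positivity. This yields the maximum principle for $u_\theta$ on $B(e)\cap B(e')$ and hence its fixed sign on $B(e')$, which is what Proposition~\ref{fssc1} needs. You do not need, and do not get, any spectral bound in $B(e')$ itself; what saves the argument is the strict domain monotonicity of $\lambda_1^+$ applied to the lens $B(e)\cap B(e')$. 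Without this idea, stage 2 of your proposal does not go through.

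A minor further remark: the characterization of foliated Schwarz symmetry you quote (``for every pair of orthogonal unit vectors \ldots'') is not the one used; the correct statement is that for \emph{every} $e\in S^{n-1}$, $u-u\circ\sigma_e$ has a fixed sign on $B(e)$ (Lemma~\ref{fssc0}), and the working version in the proof is the derivative criterion of Proposition~\ref{fssc1}: a single symmetry hyperplane $H(e')$ together with $u_{\theta_{e',\cdot}}$ of fixed sign on $B(e')$.
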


So, under the convexity assumption on $f$, the knowledge of the sign of the principal eigenvalue $\lambda_1^+(\mathcal{L}_u, B(e))$ in one cap $B(e)$ only is sufficient for the foliated Schwarz symmetry of a solution $u$ of \refe{eq1}, for any fully nonlinear uniformly elliptic operator $F$ with ellipticity constants $0<\alpha\leq \beta$.

In any bounded domain $\Omega$, we further define
\begin{equation}\label{mu}
\mu^+_2(\lpu,\Omega)=\inf_{D\subset \Omega}\max\left\{\lambda_1^+(\lpu,D),\lambda_1^+(\lpu,\Omega\setminus \overline D)\right\}
\end{equation}
where the infimum is taken on all subdomains $D$ contained in $\Omega$.

Then, for $\Omega=B$, we immediately obtain 
$$
\mu^+_2(\lpu,B)\geq 0\Rightarrow \forall e\in S^{n-1}\ \mbox{ either}\ \lambda_1^+(\lpu,B(e))\geq 0 
\ \mbox{ or}\ \lambda_1^+(\lpu,B(-e))\geq 0
$$
so that,  by applying Theorem \ref{lam0}, the following corollary holds.
\begin{corollary}\label{sec} Under the assumptions of Theorem \ref{lam0}, 
if $u$ is a viscosity solution of \refe{eq1} and $\mu^+_2(\lpu,B)\geq 0$, then $u$ is foliated Schwarz symmetric.
\end{corollary}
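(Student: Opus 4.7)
The proof is essentially a direct reduction to Theorem \ref{lam0}, with the reasoning already sketched in the paragraph introducing $\mu_2^+$. I would structure the argument as follows.

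My plan is to fix an arbitrary direction $e \in S^{n-1}$, use it to produce an admissible subdomain $D$ in the infimum defining $\mu_2^+(\mathcal{L}_u,B)$, and conclude that the required hypothesis of Theorem \ref{lam0} is met either in direction $e$ or in direction $-e$. Concretely, I would take $D = B(e)$; then $D \subset B$ is a subdomain, and $B \setminus \overline{D}$ coincides (as an open set) with the opposite cap $B(-e) = \{x \in B : x \cdot e < 0\}$, since $\overline{D}$ differs from $\{x \in B : x \cdot e \geq 0\}$ only by portions of $\partial B$, and $B \setminus \overline{B(e)}$ is exactly $B(-e)$. Thus $D$ is admissible in the infimum \eqref{mu}.

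From the definition of $\mu_2^+$ and the assumption $\mu_2^+(\mathcal{L}_u,B) \geq 0$, I then get
\[
\max\bigl\{\lambda_1^+(\mathcal{L}_u, B(e)),\, \lambda_1^+(\mathcal{L}_u, B(-e))\bigr\} \;\geq\; \mu_2^+(\mathcal{L}_u,B) \;\geq\; 0,
\]
so at least one of $\lambda_1^+(\mathcal{L}_u, B(e)) \geq 0$ or $\lambda_1^+(\mathcal{L}_u, B(-e)) \geq 0$ holds. In either case, Theorem \ref{lam0} applies, with symmetry direction $e$ or $-e$ respectively: indeed, the hypotheses of Theorem \ref{lam0} are symmetric in the choice of unit vector, since $B(-e)$ is itself a half-domain of the form $B(e')$ for $e' = -e \in S^{n-1}$. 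Hence $u$ is foliated Schwarz symmetric.

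The argument is essentially bookkeeping, so I do not anticipate a genuine obstacle; the only small point worth checking is that the open set $B \setminus \overline{B(e)}$ really equals $B(-e)$, which reduces to the fact that the hyperplane $H(e)$ intersected with $B$ has empty interior. Once this is noted, the conclusion follows immediately by invoking Theorem \ref{lam0} for whichever direction yields a nonnegative principal eigenvalue.
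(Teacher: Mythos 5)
Your proposal is correct and follows exactly the route the paper sketches just before the corollary: take $D=B(e)$ in the infimum defining $\mu_2^+$, observe that $B\setminus\overline{D}=B(-e)$, conclude that at least one of $\lambda_1^+(\lpu,B(\pm e))$ is nonnegative, and then invoke Theorem~\ref{lam0} in the corresponding direction.
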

 In the semilinear case, i.e. when $\lpu= \Delta +f'(|x|,u)$,  $\mu_2^+$ is just the second eigenvalue of $\lpu$, and therefore   the condition $\mu^+_2\geq 0$ is equivalent to require  that the Morse index of $u$ is less than or equal to one,  which is the condition used in \cite{P}  to obtain the foliated Schwarz symmetry. It turns out that, in the currently considered fully nonlinear case, $\mu^+_2$ is not an eigenvalue for $\lpu$ in $B$, as shown in  Proposition \ref{secn}.

We further observe that for the laplacian the first eigenvalue in the half domains $B(e)$ is the second eigenvalue in $B$. Then, it is natural to investigate if, also in the fully nonlinear framework, $\la_1^+(\Mpa, B(e))$ is a nodal eigenvalue for $\Mpa$ in $B$. Here and in the sequel by nodal eigenvalue we mean an eigenvalue associated with sign changing eigenfunctions. First we will prove   that $\Mpa$ cannot have nodal eigenvalues below $\la_1^+(\Mpa, B(e))$, and then we will show several properties that a nodal eigenfunction corresponding to $\la_1^+(\Mpa, B(e))$ should have.

Finally, we prove an interesting connection between   the sign of the principal eigenvalue of $\mathcal{L}_u$ in half domains  and   the nodal set 
$\mathcal{N}(u)$, i.e. the closure of the zero set of $u$. Namely, we prove that if $u$ is a sign changing viscosity solution of \refe{eq1} with $f$ independent of $x$ and with $u$ and $F$ symmetric with respect to an hyperplane $H(e)$, then the non negativity of the eigenvalue $\la_1^+(\mathcal{L}_u, \Omega(\pm e))$ implies that the nodal set $\mathcal{N}(u)$ intersects the boundary of $\Omega$, see Proposition \ref{prop2}.  As a consequence of the above result, we obtain that for any radial sign changing solution $u$  one has
 $\la_1^+(\mathcal{L}_u, B(e))< 0$ for any direction $e$, see Corollary \ref{rad-}.
In $\R^2$, the above result can be extended to a  larger class of domains,  i.e. domains which are symmetric with respect to 
two orthogonal directions and convex in those directions. Interestingly, besides the ball, the only two 
dimensional domains for which the eigenvalues of  $\Mpa$ are known explicitly have these symmetry, see \cite{BL}.

Let us finally point out that our symmetry results apply to viscosity solutions, and not only to classical solutions, of \refe{eq1}. This is essential in view of the fact that, in general, axially symmetric viscosity solutions of fully nonlinear equations may not be of class $C^2$,  as proved by Nadirashvili and Vl\u{a}du\c{t} \cite{NV}.

The paper is organized in the following way. The hypotheses and some preliminaries are recalled in the next section. In the third section we prove some  symmetry results. Foliated Schwarz symmetry is then studied in the fourth section. Finally, in the last section, we give some applications, in particular to  the study of   spectral properties.

\section{Preliminaries on fully nonlinear elliptic equations}\label{sec1}

We assume that
$F:\Omega\times \mathcal{S}_n\to \R$ is a continuous function, with $\mathcal{S}_n$ denoting the set of symmetric $n\times n$ matrices equipped with the usual partial ordering
$$
M\geq N \Longleftrightarrow M-N\geq 0 \Longleftrightarrow (M-N)\xi \cdot \xi \geq 0\quad \forall\, \xi\in \R^n\, .
$$
We will always assume that $F$ is  uniformly elliptic, that is
\begin{equation}\label{ue}
\alpha \, {\rm tr}(P)\leq F(x, M+P)-F(x, M)\leq \beta\, {\rm tr}(P)\, ,\quad \forall\, x\in \Omega\, ,\ M, P\in {\mathcal S}_n,\ P\geq 0\, ,
\end{equation}
for positive constants $0<\alpha\leq \beta$.  Let us recall that condition \refe{ue} is equivalent to
$$
\Mma (M-N)\leq F(x,M)-F(x,N)\leq \Mpa (M -N)\quad \forall\, x\in \Omega\, ,\ M, N\in {\mathcal S}_n\, ,
$$
where $\Mma$ and $\Mpa$ are the Pucci's extremal operators defined respectively as
$$
\begin{array}{c}
\displaystyle \Mma (M)= \inf_{A\in \mathcal{A}_{\alpha, \beta}} {\rm tr} (AM)= \alpha \sum_{\mu_i>0} \mu_i + \beta\sum_{\mu_i<0} \mu_i\\[1ex]
\displaystyle
\Mpa (M)= \sup_{A\in \mathcal{A}_{\alpha, \beta}} {\rm tr} (AM)= \beta\sum_{\mu_i>0} \mu_i + \alpha \sum_{\mu_i<0} \mu_i
\end{array}
$$
where $\mathcal{A}_{\alpha,\beta}=\{ A\in \mathcal{S}_n\, :\, \alpha\, I_n\leq A\leq \beta\, I_n\}$, $I_n$ being the unit matrix in $\mathcal{S}_n$, and $\mu_1, \ldots ,\mu_n$ being the eigenvalues of the matrix $M\in \mathcal{S}_n$.
 Thus, Pucci's extremal operators act as barriers  for  the whole class of  uniformly elliptic operators, and for a detailed analysis of the crucial role they play in the regularity theory for elliptic equations we refer to \cite{CaCa}. Clearly, $F(x, M)=\Mpa(M)$ or $F(x, M)=\Mma(M)$ are special cases  which can be considered as  our  model cases; in particular since they are invariant with respect to rotation and reflection.
From now on, we intend the ellipticity constants $\beta\geq \alpha$ fixed once and for all, and we will write just $\Mm$ and $\Mp$ for the Pucci's operators with ellipticity constants $\alpha$ and $\beta$.

As for the dependence on $x$ of $F$, we assume  Lipschitz continuity,  i.e. the existence of  $L>0$ such that, for all $x, y \in \Omega$ and $M\in {\mathcal S}_n$,
\begin{equation}\label{lip}
|F(x,M)-F(y,M)|\leq L\, \|M\||x-y|\, . 
\end{equation}
On the zero order nonlinearity $f$ we assume that it is of class $C^1$ on $\Omega\times \R$. 

By a solution of the Dirichlet problem \refe{eq1}, we always mean a viscosity solution $u\in C(\overline B)$. For the reader's convenience, we recall that a solution in the viscosity sense is both a viscosity subsolution  and a viscosity supersolution, as defined below.
\begin{definition} A viscosity subsolution (supersolution) of problem \refe{eq1} is an upper (lower) semicontinuous function in $\overline \Omega$ such that $u\leq(\geq) 0$ on $\partial \Omega$ and for any $x_0\in \Omega$ and $\phi\in C^2(\Omega)$ such that  $u(x_0)=\phi(x_0)$ and $u(x)\leq (\geq) \phi (x)$ for $x\in \Omega$, one has
$$
-F(x_0, D^2\phi (x_0))\leq(\geq) f(x_0, u(x_0))\,.
$$
\end{definition}
We refer to \cite{CaCa, CIL} the reader not familiar with the viscosity solutions theory for fully nonlinear equations.
In the following, all the differential inequalities we are going to consider are always understood in the viscosity sense.

Let us further recall that in the current assumptions, by standard elliptic regularity theory (see \cite{CaCa, S}), any viscosity solution $u$ of problem \refe{eq1} is of class $C^1(\overline \Omega)$, provided that $\Omega$ is of class $C^1$. As far as existence of solutions is concerned, we refer to \cite{FQ, QS}.

 In the subsequent symmetry results a crucial role will be played by the principal eigenvalues of linear perturbations of Pucci's operators. In particular, given a Lipschitz domain $D\subset \R^n$ and a  function $c\in C(\overline D)$, let us consider the uniformly  elliptic operator
$$
\mathcal{L} =\Mp +c(x)\, .
$$
In analogy with the linear elliptic case,  see \cite{BNV}, one may define
$$
\lambda_1^+(\mathcal{L} , D) \, := \sup \{ \lambda \in \R\, :\, \exists\, \varphi \in C(\overline D)\, ,\ \varphi>0 \hbox{ in } D\, ,\ -\mathcal{L} [\varphi]\geq \lambda\, \varphi \hbox{ in } D\}
$$
and
$$
\lambda_1^-(\mathcal{L} , D) \, := \sup \{ \lambda \in \R\, :\, \exists\, \varphi \in C(\overline D)\, ,\ \varphi<0 \hbox{ in } D\, ,\ -\mathcal{L} [\varphi]\leq \lambda\, \varphi \hbox{ in } D\}\,.
$$
As it is well known, see \cite{BD, BEQ, II}, associated with these values, called principal eigenvalues,  there are principal eigenfunctions $\phi_1^{\pm}\in C(\overline D)\cap C^2(D)$, defined up to positive   constant multiples, which satisfy respectively
\begin{equation}\label{eigenf+}
\left\{
\begin{array}{c}
-\mathcal{L}  [\phi_1^+]= \lambda_1^+(\mathcal{L} , D)\, \phi_1^+\quad \hbox{in } D\\[2ex]
\phi_1^+ >0 \ \hbox{in } D\, ,\ \phi_1^+=0\ \hbox{on } \partial D
\end{array}\right.
\end{equation}
\begin{equation}\label{eigenf-}
\left\{
\begin{array}{c}
-\mathcal{L}  [\phi_1^-]= \lambda_1^-(\mathcal{L} , D)\, \phi_1^-\quad \hbox{in } D\\[2ex]
\phi_1^- <0 \ \hbox{in } D\, ,\ \phi_1^-=0\ \hbox{on } \partial D.
\end{array}\right.
\end{equation}
When no ambiguities arise, the eigenvalues will be denoted by $\lambda_1^{+}$ or $\lambda_1^-$, and in certain cases we will only specify  either the domain $D$ or the choice of the operator.

A few known properties concerning these eigenvalues are used in the paper, we list them here.
\begin{proposition}\label{prl} With the above notations,  the following properties hold:
\begin{itemize}
\item[(i)] If $D_1\subset D_2$ and $D_1\neq D_2$, then $\la_1^\pm(D_1)>\la_1^\pm(D_2)\, .$
\item[(ii)] For a sequence of domains $\{D_k\}$ such that $D_k\subset D_{k+1}$, then 
$$\lim_{k\to +\infty}\la_1^\pm(D_k)=\la_1^\pm(\cup_k D_k)\, .$$
\item[(iii)] If $\alpha<\beta$ then $\la_1^+<\la_1^-\, .$
\item [(iv)] If $\la\neq \la_1^\pm$ is an  eigenvalue then every corresponding eigenfunction changes sign.
\item[(v)] $\la_1^+(D)>0 \ (\la_1^-(D)>0)$ if and only if the maximum (minimum) principle holds for $\mathcal{L}$ in $D\, .$
\smallskip

\item[(vi)] $\la_1^\pm(D)\to +\infty$ as ${\rm meas}(D)\to 0\, .$
\end{itemize}
\end{proposition}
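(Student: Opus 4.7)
The plan is to recognize that all six items are standard facts about principal eigenvalues of Pucci-type operators, established in \cite{BNV} in the linear case and extended in \cite{BD, BEQ, II}. I would organize the sketch so that the maximum principle characterization $(v)$ comes first, with the others deduced from it by combining strict monotonicity, ABP/Hölder estimates, and a scaling argument. For $(v)$, the implication $\lambda_1^+(D)>0\Rightarrow$ MP follows by testing a subsolution $u\leq 0$ on $\partial D$ against the principal eigenfunction $\phi_1^+$: the ratio $u/\phi_1^+$ satisfies a linear inequality to which the ABP estimate applies because $-\mathcal{L}[\phi_1^+]=\lambda_1^+\phi_1^+>0$. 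Conversely, if MP holds, one solves $-\mathcal{L}\varphi=1$, $\varphi=0$ on $\partial D$, obtaining a positive supersolution admissible in the definition of $\lambda_1^+$ with strictly positive test value $\lambda=1/\sup\varphi$.

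For $(i)$, given $D_1\subsetneq D_2$ with eigenfunction $\phi_2=\phi_1^+(\mathcal{L},D_2)$, the shifted operator $\mathcal{L}+\lambda_1^+(D_2)$ admits $\phi_2$ as a strict supersolution on $\overline{D_1}$ (strict positivity on $\partial D_1$ by Hopf), so by $(v)$ applied to the shifted operator, $\lambda_1^+(\mathcal{L}+\lambda_1^+(D_2),D_1)>0$, i.e.\ $\lambda_1^+(D_1)>\lambda_1^+(D_2)$. Item $(vi)$ follows: for any $D\subset B_R$, $(i)$ gives $\lambda_1^\pm(D)\geq\lambda_1^\pm(B_R)$, and the explicit scaling $\lambda_1^\pm(B_R)=R^{-2}\lambda_1^\pm(B_1)\to +\infty$ as $R\to 0$ finishes the argument. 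For $(ii)$, the monotonicity from $(i)$ yields a non-increasing sequence $\lambda_1^\pm(D_k)$ bounded below by $\lambda_1^\pm(\cup_k D_k)$; normalizing $\phi_1^\pm(D_k)$ so that $\|\phi_1^\pm(D_k)\|_\infty=1$ and passing to the limit via interior Hölder estimates on compact subsets of $\cup_k D_k$ produces a bounded, sign-preserving limit solving the eigenvalue equation with the limit eigenvalue, which by $(iv)$ forces equality.

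For $(iv)$, if $\phi$ is an eigenfunction with eigenvalue $\lambda$ that does not change sign, then either $\phi$ or $-\phi$ is a positive solution to the corresponding equation, and an eigenvalue-comparison argument of the type in \cite{BD,BEQ} (comparing $\phi$ with $\phi_1^\pm$ via scaling to a first contact point and applying the strong maximum principle in its fully nonlinear form) forces $\lambda=\lambda_1^\pm$. Finally, $(iii)$ reduces to $(i)$ applied to different operators: using $\Mp(-M)=-\Mm(M)$, the function $-\phi_1^-$ is positive and satisfies $-(\Mm+c)[-\phi_1^-]=\lambda_1^-(-\phi_1^-)$, so
\[
\lambda_1^-(\Mp+c,D)=\lambda_1^+(\Mm+c,D),
\]
and a strict comparison, exploiting $\Mm(M)<\Mp(M)$ whenever the eigenfunction's Hessian has mixed-sign eigenvalues, yields $\lambda_1^+(\Mm+c,D)>\lambda_1^+(\Mp+c,D)$. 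The main obstacle is precisely this strict inequality in $(iii)$: one must exclude the degenerate possibility that $D^2\phi_1^\pm$ keeps a constant sign throughout $D$, which is done by noting that $\phi_1^\pm$ attains an interior extremum (so its Hessian is semidefinite there of one sign but indefinite near the boundary, where Hopf provides a gradient with the opposite sign behaviour), or alternatively by a perturbation/continuity argument in $\beta-\alpha$.
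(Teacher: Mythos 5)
The paper does not prove this proposition at all; it is explicitly listed as a collection of ``known properties'' with references to \cite{BNV, BD, BEQ, II}. So there is no internal proof to compare against, and your task was really to supply one. Your overall organization (putting the maximum-principle characterization first and deriving the others from it) is sensible and matches the standard literature, and items (i), (ii), (iv), (v) are sketched correctly in spirit. Two points need correction, one minor and one fatal.

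The minor issue is in (i): Hopf's lemma does not give strict positivity of $\phi_2=\phi_1^+(\mathcal{L},D_2)$ on all of $\partial D_1$, because part of $\partial D_1$ may lie on $\partial D_2$, where $\phi_2$ vanishes. What is true is that $\phi_2\geq 0$ on $\partial D_1$ with strict inequality at at least one point (since $D_1\subsetneq D_2$ forces part of $\partial D_1$ into the interior of $D_2$, where the strong maximum principle gives $\phi_2>0$). This already shows $\lambda_1^+(D_1)\geq\lambda_1^+(D_2)$; the strict inequality then comes from the uniqueness statement of Proposition~\ref{key}: if equality held, $\phi_2$ restricted to $D_1$ would be a principal eigenfunction there and so would vanish on all of $\partial D_1$, a contradiction. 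Relatedly, in (iii) you are actually over-cautious: using the paper's formulas, $\Mp(M)-\Mm(M)=(\beta-\alpha)\sum_i|\mu_i|$, so $\Mp(D^2\phi^+)=\Mm(D^2\phi^+)$ forces $D^2\phi^+\equiv 0$, i.e.\ $\phi^+$ affine, which is immediately incompatible with $\phi^+>0$ in $D$ and $\phi^+=0$ on $\partial D$. The ``mixed-sign Hessian'' discussion is unnecessary.

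The genuine gap is in (vi). Your argument is: for any $D\subset B_R$, (i) gives $\lambda_1^\pm(D)\geq\lambda_1^\pm(B_R)$, and $\lambda_1^\pm(B_R)\to+\infty$ as $R\to 0$. But the hypothesis is ${\rm meas}(D)\to 0$, not ${\rm diam}(D)\to 0$. A thin slab inside $B$ has arbitrarily small measure while its diameter stays bounded away from zero, so it is not contained in any small ball, and the domain-monotonicity argument gives no information. (This is precisely the situation that arises in the proof of Theorem~\ref{lam0}, where the small-measure set $B(e(\theta))\setminus K$ is thin, not small in diameter.) The correct argument is the Alexandroff--Bakelman--Pucci estimate, which directly bounds $\lambda_1^+$ from below by a quantity of the form $C_n\,{\rm meas}(D)^{-2/n}-\|c\|_\infty$; this is the argument in \cite{BNV} and its fully nonlinear versions, and is what the paper implicitly relies on. Also note the scaling $\lambda_1^\pm(B_R)=R^{-2}\lambda_1^\pm(B_1)$ only holds for the pure Pucci operator, not for $\mathcal{L}=\Mp+c(x)$; one would need the extra step $\lambda_1^+(\Mp+c,D)\geq\lambda_1^+(\Mp,D)-\|c\|_{L^\infty(D)}$, but this is secondary to the measure-versus-diameter issue.
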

Let us recall that  the operator $\mathcal{L}$ satisfies the maximum (minimum) principle in $\Omega$ if for every  function $u$ upper (lower) semicontinuous in $\overline \Omega$ satisfying $-\mathcal{L}  [u] \leq 0$ in $\Omega$ and $u\leq 0$ on $\partial \Omega$ (resp. $-\mathcal{L} [u]\geq 0$ in $\Omega$ and $u\geq 0$ on $\partial \Omega$) one has $u\leq 0$ in $\overline \Omega$ ($u\geq 0$ in $\overline \Omega$). 

Finally we recall that the principal eigenfunctions are the only positive (negative) supersolutions  of \refe{eigenf+}(subsolutions of \refe{eigenf-}) and that the following  proposition, which will be used frequently in the sequel, holds true.  

\begin{proposition} \label{key} Assume that there exists $u$ lower semicontinuous and positive such that
$$ -\mathcal{L}[u]\geq 0\ \ \mbox{in}\ \Omega\, . $$
If there exists  a function $v$ upper semicontinuous in $\overline \Omega$  satisfying $-\mathcal{L}[v]\leq 0$ in $\Omega$,  $v\leq 0$ on $\partial \Omega$, and  such that $v(\hat x)>0$
for some $\hat x\in\Omega$, 
then, for some $t>0$,
$$v\equiv t\, u\ \mbox{and}\  -\mathcal{L}[u]= 0.$$
\end{proposition}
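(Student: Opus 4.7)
The plan is an adaptation of the Berestycki--Nirenberg--Varadhan scaling argument to the fully nonlinear viscosity setting. The parameter used to compare $v$ against $u$ is
\begin{equation*}
t^* := \inf\bigl\{t>0\, :\, tu - v\ge 0 \text{ in } \Omega\bigr\}.
\end{equation*}
I would first argue that $t^*$ is well defined and strictly positive: on any compact $K\Subset\Omega$, $u$ is bounded below by a positive constant by lower semicontinuity and $v$ is bounded above on $\overline{\Omega}$ by upper semicontinuity, while the standard boundary estimates for Pucci-type operators together with $v\le 0$ on $\partial\Omega$ keep $v/u$ bounded near $\partial\Omega$; the positivity $t^*>0$ comes from $v(\hat x)>0$. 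Since $tu-v\ge 0$ is a closed condition in $t$, the function $w:=t^* u - v$ is non-negative in $\Omega$.

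The central step is to show that $w$ is a viscosity supersolution of the companion concave Pucci-type equation
\begin{equation*}
-\Mm(D^2 w) - c(x)\, w \ge 0 \quad \text{in }\Omega.
\end{equation*}
Formally this follows by combining the inequalities $\Mp(D^2 u)+c u\le 0$ and $\Mp(D^2 v)+c v\ge 0$ via the positive $1$-homogeneity of $\Mp$ and the super-additivity of $\Mm$ (equivalently, the sub-additivity of $\Mp$). In the viscosity sense, because $u$ is only lower semicontinuous and $v$ only upper semicontinuous, the rigorous derivation is not immediate and goes through the standard regularization by inf- and sup-convolutions, or a Crandall--Ishii doubling of variables. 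This is the principal obstacle of the proof.

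Finally, since $w\ge 0$ is a viscosity supersolution of $\Mm+c(x)$, the strong maximum principle for $\Mm$ gives the dichotomy: either $w\equiv 0$ in $\Omega$, or $w>0$ in $\Omega$. If $w\equiv 0$, then $v=t^* u$; plugging this identity into $-\mathcal{L}[v]\le 0$ and $-\mathcal{L}[u]\ge 0$ and using the positive homogeneity of $\mathcal{L}$ forces $-\mathcal{L}[u]=0$, which is the conclusion with $t=t^*$. If instead $w>0$ in $\Omega$, Hopf's boundary lemma applied to $w$ yields $w\ge \epsilon_1\, d(\cdot,\partial\Omega)$ near $\partial\Omega$, while the standard boundary bound for the supersolution $u$ gives $u\le C\, d(\cdot,\partial\Omega)$; hence $w\ge \epsilon_2\, u$ in $\Omega$ for some $\epsilon_2>0$, so $(t^*-\epsilon_2)u - v\ge 0$ in $\Omega$, contradicting the infimum defining $t^*$.
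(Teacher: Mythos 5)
Your outline reproduces the Berestycki--Nirenberg--Varadhan scaling argument that the cited references \cite{BNV,Pat} implement (the paper gives no proof, deferring to those sources), so the strategy is the right one. The algebraic heart of the proof is also correct: from $\Mp(M-N)\ge \Mp(M)-\Mp(N)$, equivalently $\Mm(M-N)\le \Mp(M)-\Mp(N)$, together with positive $1$-homogeneity, one indeed obtains that $w=t^*u-v$ is a viscosity supersolution of $\Mm(D^2w)+c\,w\le 0$; and you correctly flag that the passage from the formal computation to a genuine viscosity inequality is the real technical burden, resolved by sup/inf-convolution or a doubling argument.

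There is, however, a concrete gap in the final contradiction. You write ``the standard boundary bound for the supersolution $u$ gives $u\le C\,d(\cdot,\partial\Omega)$,'' but $u$ carries no boundary condition in the statement: $u$ is merely lower semicontinuous and positive in $\Omega$, and there is no hypothesis forcing $u\to 0$ at $\partial\Omega$. The estimate $u\le C\,d$ is exactly such a vanishing statement and cannot be invoked. As stated, your argument that $w\ge\epsilon_2 u$ therefore fails if, say, $\liminf_{x\to\partial\Omega}u(x)>0$. The fix is to split into cases: if $\liminf_{\partial\Omega}u>0$, then since $v$ is upper semicontinuous on $\overline\Omega$ and $v\le 0$ on $\partial\Omega$, one has $v<\epsilon$ near $\partial\Omega$, hence $w=t^*u-v$ is bounded below by a positive constant near $\partial\Omega$ and the improvement $(t^*-\epsilon_2)u\ge v$ follows by compactness without any Hopf-type estimate; the Hopf/boundary-comparison route you describe is only needed (and only valid) when $u$ degenerates at $\partial\Omega$. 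A smaller caveat in the same spirit: for the Hopf-type lower bound $w\ge\epsilon_1 d$ you need a suitable viscosity version of Hopf's lemma and an interior-sphere (or at least interior-cone) condition for $\Omega$, which should be stated rather than taken for granted.

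Finally, when $w\equiv 0$ you correctly pass from $v=t^*u$ and the two one-sided inequalities for $u$ and $v$, using homogeneity of $\mathcal{L}$, to $-\mathcal{L}[u]=0$; this step is fine. With the boundary case distinction made explicit, the argument becomes a faithful adaptation of \cite{BNV} to the Pucci setting.
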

For the proof  we refer to \cite{BNV, Pat}.

\section{First symmetry results}
 Here and in the sequel we set
$f'(x,s)= \frac{\partial f}{\partial s} (x,s)$ and we use the notations fixed in the Introduction. Moreover, 
for any two linearly independent unit vectors $e, e'\in S^{n-1}$, we denote by $\Pi(e,e')$ the plane 
spanned by $e$ and $e'$, and by $\theta_{e,e'}$ any polar angle coordinate in $\Pi(e,e')$. If $u:B\to \R$ is a differentiable function, we set $u_{\theta_{e,e'}}$ to indicate the partial derivative of $u$ with respect to $\theta_{e,e'}$, defined as zero at the origin if $B$ is a ball.

The following technical lemma is the starting point of all our symmetry results.

\begin{lemma}\label{diffe} Assume that $F$ satisfies \refe{ue} and \refe{lip} and let $u\in C(\overline \Omega)\cap C^1(\Omega)$ be a viscosity solution of \refe{eq1}
\begin{itemize}
\item[(i)]  Assume that $\Omega$ is symmetric with respect to the hyperplane $H(e)$,  $F$  is invariant with respect to the reflection $\sigma_e$, i.e.
\begin{equation}\label{sime}
F\left(\sigma_e(x), (I_n-2e\otimes e)M(I_n-2e\otimes e)\right)=F(x,M)\quad \forall\, x\in \Omega\, ,\ M\in \mathcal{S}_n\, ,
\end{equation}
and that $f$ satisfies
\begin{equation}\label{f}
f(\sigma_e(x), s)=f(x, s)\, ,\quad f(x, \cdot) \hbox{ is convex in $\R$}\, , \quad \forall\, x\in \Omega\, ,\ s\in \R\, .
\end{equation}
Then, the function  $w=u-u\circ \sigma_e$ satisfies 
$$
-\Mp (D^2 w)\leq f'(x, u)\, w\quad \hbox{ in } \Omega
$$
in the viscosity sense. 
Moreover, if $f(x,\cdot)$ is strictly convex,  then either  $w\equiv 0$ or $w$ is a strict subsolution.
\item[(ii)] Assume that $\Omega =B$ is a bounded radial domain,   $F$  is invariant by rotations, i.e. for every orthogonal matrix $O$ one has
\begin{equation}\label{ri}
F(O^t x, O^t MO)= F(x, M) \quad \forall\, x\in B\, ,\ M\in \mathcal{S}_n\, ,
\end{equation}
and that $f$ is radially symmetric in $x$. 
Then, for any pair of linearly independent unit vectors $e,\ e'\in S^{n-1}$, the functions $u_{\theta_{e,e'}}$ and $-u_{\theta_{e,e'}}$ both satisfy 
$$
\begin{array}{c}
-\Mp (D^2u_{\theta_{e,e'}})\leq f'(|x|,u)\, u_{\theta_{e,e'}}\quad \hbox{ in } B\\[1ex]
 -\Mp (D^2(-u_{\theta_{e,e'}}))\leq f'(|x|,u)\, (-u_{\theta_{e,e'}})\quad \hbox{ in } B
\end{array}
$$
in the viscosity sense.
\item[(iii)] Assume that $f$ does not depend on $x$. Then, for every $1\leq i\leq n$ both the partial derivative $u_i=\frac{\partial u}{\partial x_i}$ and $-u_i$ satisfy 
$$
\begin{array}{c}
-\Mp \left( D^2u_i\right)\leq f'(u)\, u_i\quad \hbox{ in } \Omega\\[1ex]
-\Mp \left( D^2(-u_i)\right)\leq f'(u)\,(- u_i) \quad \hbox{ in } \Omega
\end{array}
$$
in the viscosity sense.
\end{itemize}
\end{lemma}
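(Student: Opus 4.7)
The unifying idea in all three parts is to construct a companion function $\phi$ (or a one-parameter family $\phi_t$) that still solves \refe{eq1}, thanks to the invariance of $F$ and $f$, and then to exploit uniform ellipticity \refe{ue} in the form $F(x,M)-F(x,N)\le \Mp(M-N)$ to deduce that $\phi-u$ is a viscosity subsolution of
\begin{equation*}
-\Mp(D^2(\phi-u))\le f(\cdot,\phi)-f(\cdot,u)\quad\text{in }\Omega.
\end{equation*}
In part (i) the companion $\phi$ is a fixed reflected copy of $u$ and the nonlinear right-hand side is tamed by convexity of $f$; in parts (ii) and (iii) the companion $\phi_t$ depends on a parameter which is driven to $0$ to isolate the derivative.

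For (i), set $\phi(x)=u(\sigma_e(x))$. With $S=I_n-2\,e\otimes e$ a direct calculation gives $D^2\phi(x)=S\,D^2u(\sigma_e(x))\,S$, and combining \refe{sime} with the symmetries in \refe{f} one checks that $-F(x,D^2\phi)=f(x,\phi)$ in the viscosity sense; hence $w=u-\phi$ satisfies the displayed inequality with right-hand side $f(x,u)-f(x,\phi)$. The tangent-line inequality coming from convexity, $f(x,\phi)\ge f(x,u)+f'(x,u)(\phi-u)$, rewrites as $f(x,u)-f(x,\phi)\le f'(x,u)\,w$, which yields the claim. If $f(x,\cdot)$ is strictly convex the tangent-line inequality is strict wherever $w\ne 0$, producing the strict-subsolution statement.

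For (ii), let $R_t\in SO(n)$ be the rotation by angle $t$ in $\Pi(e,e')$, which preserves $B$ since $B$ is radial, and set $\phi_t(x)=u(R_t x)$. The rotation invariance \refe{ri} and the radial symmetry of $f$ make $\phi_t$ a viscosity solution of \refe{eq1} as well, so $w_t=\phi_t-u$ satisfies the displayed inequality. Using the positive $1$-homogeneity of $\Mp$ and dividing by $t>0$,
\begin{equation*}
-\Mp\bigl(D^2(w_t/t)\bigr)\le\frac{f(|x|,\phi_t)-f(|x|,u)}{t}\quad\text{in }B.
\end{equation*}
Since $u\in C^1(B)$, the difference quotient $w_t/t$ converges locally uniformly to $u_{\theta_{e,e'}}$, and the right-hand side converges locally uniformly to $f'(|x|,u)\,u_{\theta_{e,e'}}$; the standard stability theorem for viscosity subsolutions yields the desired inequality for $u_{\theta_{e,e'}}$. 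To treat $-u_{\theta_{e,e'}}$, apply \refe{ue} with the roles of $\phi_t$ and $u$ exchanged, obtaining $-\Mp(D^2(u-\phi_t))\le f(|x|,u)-f(|x|,\phi_t)$, then divide by $t>0$ and pass to the same limit. Part (iii) is entirely parallel, with $\phi_t(x)=u(x+te_i)$ in the role of the rotated copy, using translation invariance in the $e_i$-direction.

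The principal delicate step is the passage to the limit in the viscosity inequalities: one must verify uniform convergence of the difference quotients on compact subsets, which the assumed $C^1$-regularity of $u$ guarantees, and then invoke stability of Pucci-type subsolutions under local uniform convergence. A secondary point worth stressing is that the inequality for $-u_{\theta_{e,e'}}$ (resp.\ $-u_i$) must be obtained by applying \refe{ue} to the reversed pair $(D^2 u, D^2\phi_t)$ rather than by sending $t\to 0^-$, so that no sign of $t$ is flipped and one genuinely recovers a subsolution, not a supersolution, of the same Pucci-type inequality. For (iii) one also needs $F$ to be effectively translation invariant in the $e_i$-direction for the clean bound to survive, which is the setting in which this lemma will be applied (typically $F=\Mp$ or $F=\Mm$).
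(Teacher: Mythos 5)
Your overall strategy matches the paper's exactly: in each part you produce a companion solution via the symmetry (reflection, rotation, translation), take the difference, invoke uniform ellipticity in the form $F(x,M)-F(x,N)\le \Mp(M-N)$ to get a Pucci-type subsolution inequality for the difference, and then either apply the tangent-line inequality from convexity (part (i)) or divide by the parameter and pass to the limit (parts (ii), (iii)). Your device of exchanging the roles of $u$ and $\phi_t$ to obtain the inequality for $-u_{\theta_{e,e'}}$, rather than sending $\theta_0\to 0^-$ as the paper does, is a harmless cosmetic variation.

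However, there is a genuine gap at the step you state as if it were immediate: ``to deduce that $\phi-u$ is a viscosity subsolution of $-\Mp(D^2(\phi-u))\le f(\cdot,\phi)-f(\cdot,u)$.'' For classical $C^2$ solutions this does follow by subtracting the two equations and using uniform ellipticity pointwise, but $u$ here is only a viscosity solution, so $D^2 u$ and $D^2\phi$ need not exist, and the formal computation $D^2\phi(x)=S\,D^2u(\sigma_e(x))\,S$ has no pointwise meaning. The fact that the difference of two viscosity solutions of a uniformly elliptic equation satisfies the extremal Pucci inequality in the viscosity sense is a nontrivial theorem; the paper makes this precise via the sup/inf-convolution regularization procedure of Theorem 5.3 in \cite{CaCa}, referring to the detailed implementation in Proposition~2.1 of \cite{DLS} (which is also where the Lipschitz-in-$x$ hypothesis \refe{lip} enters). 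You instead locate the ``principal delicate step'' in the passage to the limit, but that part is the routine one (local uniform convergence of difference quotients of a $C^1$ function plus standard stability of viscosity subsolutions); the genuinely delicate point is the comparison of the two viscosity solutions before any limit is taken, and your proposal does not address it.

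Your closing remark that part (iii) seems to require translation invariance of $F$ is a fair observation about the formal argument, but note that the lemma's hypotheses already include Lipschitz continuity of $F$ in $x$ (condition \refe{lip}); it is precisely within the sup/inf-convolution comparison of \cite{DLS} that this $x$-dependence is controlled, which is why the paper can say that the proof of (iii) ``runs as for (ii)'' without assuming $F$ independent of $x$.
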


\proof (i) Let $u\in C(\overline \Omega)$ be a viscosity solution of \refe{eq1}. By the invariance of the equation with respect to the reflection $\sigma_e$,   $u\circ \sigma_e$ is also a viscosity solution of \refe{eq1}. Then, the difference
$w=u-u\circ \sigma_e$ is a viscosity subsolution of
\begin{equation}\label{subs}
-\Mp (D^2 w)\leq f(x, u)-f(x, u\circ \sigma_e)\quad \hbox{ in } \Omega\, .
\end{equation}
If $u$ and $u\circ \sigma_e$ are classical solutions of \refe{eq1}, then \refe{subs} is an immediate consequence of the uniform ellipticity of $F$. In the general case, this follows from assumptions \refe{ue} and \refe{lip} by means of the standard regularization procedure by sup/inf--convolution, in the spirit of Theorem 5.3 of \cite{CaCa}. For a detailed proof we refer to the proof of Proposition 2.1 in \cite{DLS}. By \refe{subs} and the convexity of $f(x, \cdot)$, we immediately get the conclusion.

(ii) Let us fix $e, e', \theta_{e,e'}$ as in the statement. 
We aim  at  "differentiating" with respect to $\theta_{e,e'}$ the equation satisfied by $u$. Let us fix $\theta_0\in \R$, and let us denote by $\mathcal{R}_0:\R^n\to \R^n$ the rotation that maps any point $x$ having cylindrical coordinates $(r,\theta,\eta)$ with respect to the plane $\Pi(e,e')$ into the point $\mathcal{R}_0(x)$ with cylindrical coordinates $(r, \theta+\theta_0,\eta)$. Let us further set $u_0(x)=u(\mathcal{R}_0(x))$. Then, by the rotational invariance of $F$ and $f$, we have that both $u$ and $u_0$ satisfy
$$
-F(x, D^2 u)=f(|x|, u)\, ,\quad -F(x, D^2u_0)=f(|x|, u_0)\quad \hbox{ in } B\, .
$$
By uniform ellipticity, arguing as in the proof of (i), we get that the difference $u_0-u$ satisfies, in the viscosity sense,
$$
-\Mp (D^2(u_0-u)) \leq f(|x|, u_0)- f(|x|, u)\quad \hbox{ in } B\, .
$$
Next, by the  homogeneity properties of $\Mp$, we also have that for all $\theta_0>0$
$$
-\Mp \left( D^2\left( \frac{u_0-u}{\theta_0}\right) \right)\leq \frac{f(|x|, u_0)- f(|x|, u)}{\theta_0}\quad \hbox{ in } B\, ,
$$
whereas, for all $\theta_0<0$,
$$
\Mp \left( -D^2\left( \frac{u_0-u}{\theta_0}\right) \right)\geq \frac{f(|x|, u_0)- f(|x|, u)}{\theta_0}\quad \hbox{ in } B\, .
$$
By letting $\theta_0\to 0^\pm$, and using the stability properties of viscosity subsolutions and the fact that $\frac{u_0-u}{\theta_0} \to u_{\theta_{e,e'}}$ locally uniformly in $B$, we finally obtain both
$$
\Mp (D^2u_{\theta_{e,e'}}) + f' (|x|, u)\, u_{\theta_{e,e'}}\geq 0 \quad \hbox{ in }B\, ,
$$
and
$$
\Mp (D^2(-u_{\theta_{e,e'}})) + f' (|x|, u)\, (-u_{\theta_{e,e'}})\geq 0 \quad \hbox{ in }B\, ,
$$
in the viscosity sense.

(iii) The proof runs as for (ii).

\hfill$\Box$

\begin{remark}\label{concave}
{\rm In statement (i), if $f(x,\cdot)$ is assumed to be concave, then one has
$$
-\Mm (D^2 w)\geq f'(x, u)\, w\quad \hbox{ in } \Omega\, .
$$}
\end{remark}

We are now ready to prove our first symmetry result for viscosity solutions. If $u$ is a  viscosity solution of \refe{eq1},  we denote by $\mathcal{L}_u$ the  "linearized" fully nonlinear operator
$$
\mathcal{L}_u \, :\, = \Mp +f'(x,u)\, ,
$$
and by $\la_1^\pm(\mathcal{L}_u, \Omega(e))$,  $\la_1^\pm(\mathcal{L}_u, \Omega(-e))$ the principal eigenvalues of $\mathcal{L}_u$ in the domains $\Omega(\pm e)=\Omega \cap \{ x\cdot (\pm e)>0\}$.

\begin{proposition}\label{prop1} Assume that $\Omega$ is symmetric with respect to the hyperplane $H(e)$, $F$ satisfies \refe{ue}, \refe{lip} and \refe{sime} and that $f$ satisfies \refe{f}. Let $u$ be a viscosity solution of \refe{eq1} and assume further that either 
\begin{itemize}
\item[(i)]
$\la_1^+(\mathcal{L}_u, \Omega(\pm e))>0$
\end{itemize}
 or
 \begin{itemize}
\item[(ii)]
$\la_1^+(\mathcal{L}_u, \Omega(\pm e))\geq 0$ and  $f$ is strictly convex.
\end{itemize}
Then, $u$ is symmetric with respect to the hyperplane $H(e)$.
\end{proposition}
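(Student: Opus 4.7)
The strategy is to set $w = u - u\circ \sigma_e$ and prove $w \equiv 0$ in $\Omega$, which is exactly the claimed symmetry. By Lemma \ref{diffe}(i), $w$ is a viscosity subsolution of
$$
-\mathcal{L}_u[w] \leq 0 \quad \text{in } \Omega\, ;
$$
it is antisymmetric, $w\circ \sigma_e = -w$; and the $\sigma_e$--symmetry of $\Omega$ together with $u = 0$ on $\partial \Omega$ give $w = 0$ on both $\partial \Omega(e)$ and $\partial \Omega(-e)$ (trivially on the flat part $H(e)\cap \overline\Omega$, and on the curved part because $\sigma_e$ maps $\partial \Omega$ to itself).

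For case (i) the hypothesis $\lambda_1^+(\mathcal{L}_u, \Omega(\pm e)) > 0$ yields, via Proposition \ref{prl}(v), the maximum principle for $\mathcal{L}_u$ on each half-cap. Applied to $w$ on $\Omega(e)$ it gives $w \leq 0$ in $\Omega(e)$; applied on $\Omega(-e)$ and composed with $\sigma_e$ by antisymmetry, it gives $w \geq 0$ in $\Omega(e)$. Hence $w \equiv 0$ in $\Omega(e)$, and by antisymmetry in all of $\Omega$.

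For case (ii) the borderline value $\lambda_1^+ = 0$ defeats the direct MP, so I would argue by contradiction, exploiting the rigidity in Proposition \ref{key} together with the strict-convexity upgrade in Lemma \ref{diffe}(i). Assume $w \not\equiv 0$ in $\Omega$; by antisymmetry I may suppose $w(\hat x) > 0$ for some $\hat x \in \Omega(e)$ (the other sign is handled identically on $\Omega(-e)$ using $\lambda_1^+(\mathcal{L}_u,\Omega(-e)) \geq 0$). The principal eigenfunction $\phi_1^+ > 0$ on $\Omega(e)$ satisfies $-\mathcal{L}_u[\phi_1^+] = \lambda_1^+ \phi_1^+ \geq 0$. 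Proposition \ref{key}, applied with $\phi_1^+$ in the role of the positive supersolution and $w$ in the role of $v$ (the assumptions $w \leq 0$ on $\partial \Omega(e)$ and $w(\hat x) > 0$ are met), forces $w \equiv t\, \phi_1^+$ on $\Omega(e)$ for some $t > 0$ and $-\mathcal{L}_u[\phi_1^+] \equiv 0$; in particular $\lambda_1^+ = 0$ and $-\mathcal{L}_u[w] \equiv 0$ in $\Omega(e)$. On the other hand, the last clause of Lemma \ref{diffe}(i) asserts that, because $f(x,\cdot)$ is strictly convex and $w \not\equiv 0$, the subsolution $w$ is in fact \emph{strict}, meaning $-\mathcal{L}_u[w] < 0$ at each point where $w \neq 0$. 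Since $w = t\phi_1^+ > 0$ throughout $\Omega(e)$, this contradicts $-\mathcal{L}_u[w] \equiv 0$, so $w \equiv 0$.

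The main obstacle is the degenerate case $\lambda_1^+ = 0$, in which the maximum principle is unavailable and direct comparison fails. The remedy is two-fold: Proposition \ref{key} pins down any nontrivial positive subsolution vanishing on the boundary as a multiple of the principal eigenfunction (and forces the corresponding eigenvalue to vanish), while strict convexity of $f$ upgrades the subsolution inequality for $w$ to a strict one via Lemma \ref{diffe}(i); together they produce the contradiction that closes the proof.
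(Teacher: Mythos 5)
Your proof is correct and follows essentially the same route as the paper: the same function $w=u-u\circ\sigma_e$, the same antisymmetry observation, the same use of Lemma \ref{diffe}(i), the maximum principle via Proposition \ref{prl}(v), the rigidity result Proposition \ref{key}, and the strict-convexity upgrade to close case (ii). The one noticeable difference is cosmetic: the paper's treatment of case (ii) runs a small case analysis (one of the two principal eigenvalues equal to zero, versus both equal to zero), whereas you bypass it by noting directly that if $w\not\equiv 0$ then antisymmetry forces $w$ to be positive somewhere in one of the two caps, and Proposition \ref{key} applied in that cap — using $\phi_1^+$ as the positive supersolution, which only needs $\lambda_1^+\geq 0$ — already yields $w=t\phi_1^+$, $\lambda_1^+=0$ and $-\mathcal{L}_u[w]\equiv 0$, contradicting strict subsolution status. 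This unification is a mild simplification but not a different method.
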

\proof Let us set $w=u-u\circ \sigma_e$ and observe that, by definition, $w$ is antisymmetric with respect to $H(e)$ and satisfies $w=0$ on $\partial \Omega(\pm e)$. By Lemma \ref{diffe},   $w$ is a viscosity subsolution of
$$
\left\{
\begin{array}{c}
-\Mp (D^2w) \leq f'(x,u)\, w \quad \hbox{ in } \Omega(\pm e)\\[1ex]
w=0 \quad \hbox{ on } \partial \Omega(\pm e)
\end{array}\right.
$$
If $\la_1^+(\mathcal{L}_u, \Omega (\pm e))>0$, then by the maximum principle  both $w\leq 0$ in $\Omega(e)$ and $w\leq 0$ in $\Omega(-e)$, so that, by antisymmetry, $w\equiv 0$ in $\Omega$. If one of the eigenvalues is zero, say $\la_1^+(\mathcal{L}_u, \Omega (e))=0$ and $\la_1^+(\mathcal{L}_u, \Omega(-e))>0$ and, by contradiction, $w\not\equiv 0$, then $w<0$ in $\Omega(-e)$ by the strong maximum principle. Therefore, $w>0$ in $\Omega(e)$ and by Proposition \ref{key} $w$ satisfies
$$
-\Mp (D^2w)= f'(x,u)\, w\quad \hbox{ in } \Omega(e)\, ,
$$
a contradiction to the strict convexity of $f$ by Lemma \ref{diffe} (i).
Analogously, if both $\la_1^+(\mathcal{L}_u, \Omega(\pm e))=0$, then, either $w\leq 0$ in $\Omega(\pm e)$, and then again $w\equiv 0$ in $\Omega$, or, otherwise, $w$ is a solution either in $\Omega(e)$ or in $\Omega(-e)$, in contrast with the strict convexity of $f$.

\hfill$\Box$

\begin{remark}{\rm Since $\la^-(\Mm+f^\prime(x,u),D)=\la^+(\Mp+f^\prime(x,u),D)$ for any domain $D$, by Remark \ref{concave} the same conclusion of Proposition \ref{prop1} holds if $f$ is concave.}
\end{remark}

In the remaining part of this section we will exhibit a  sufficient condition for the eigenvalue 
$\la_1^+(\mathcal{L}_u, \Omega(e))$ to be negative when $u$ is a sign changing viscosity solution symmetric 
with respect to the hyperplane  $H(e)$. 

Let us fix, for simplicity, $e=e_1=(1,0,\ldots ,0)\in S^{n-1}$ and  let  $\Omega$ be  a smooth bounded domain symmetric with respect to $H(e_1)$ and convex in the $x_1$--direction, i.e. for any two points in $\Omega$ having the same $x_1$--coordinate, the segment joining them is also contained in $\Omega$. We are going to consider a viscosity solution $u\in C^1(\overline \Omega)$ of the problem
\begin{equation}\label{eq2}
\left\{\begin{array}{c}
-F(x, D^2u)= f(u)\quad \hbox{ in } \Omega\, ,\\[1ex]
u=0 \quad \hbox{ on } \partial \Omega
\end{array}\right.
\end{equation}
Note that in \refe{eq2} $f$ does not depend on $x$. 
We recall that the nodal set $\mathcal{N} (u)$ of a solution $u$ of \refe{eq2} is defined as
$$
\mathcal{N}(u) \, := \overline{\left\{ x\in \Omega\, :\, u(x)=0\right\}}\, .
$$
\begin{proposition}\label{prop2}
Let $u\in C^1(\overline \Omega)$ be a sign changing   viscosity solution of \refe{eq2}, with $F$ satisfying \refe{ue}, \refe{lip} and \refe{sime} with $e=e_1$ and assume that  $u$ is even with respect to $x_1$. Then
$$ \la_1^+(\mathcal{L}_u, \Omega(\pm e_1))\geq 0\quad  \Longrightarrow \quad \mathcal{N}(u)\cap \partial \Omega\neq \emptyset \, . 
$$
\end{proposition}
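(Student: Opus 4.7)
The plan is to argue by contradiction: assume $\mathcal{N}(u)\cap\partial\Omega=\emptyset$ while $\lambda_1^+(\mathcal{L}_u,\Omega(\pm e_1))\geq 0$, and derive a contradiction by exhibiting a subsolution in $\Omega(e_1)$ that cannot be controlled by the maximum principle. Under the contradiction hypothesis, $\mathrm{dist}(\mathcal{N}(u),\partial\Omega)>0$, so $u$ has a definite sign in a one-sided tubular neighbourhood of $\partial\Omega$; to keep the exposition clean I treat first the case $u>0$ throughout such a neighbourhood (the case $u<0$ being symmetric by swapping $u_1$ with $-u_1$).

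The natural candidate for the subsolution is $v=u_1=\partial u/\partial x_1$, viewed on $\overline{\Omega(e_1)}$. By Lemma \ref{diffe}(iii) applied to the $x$-independent nonlinearity $f$, $v$ is a viscosity subsolution of $-\mathcal{L}_u[v]\leq 0$ in $\Omega(e_1)$. I would then read off the boundary values of $v$ on $\partial\Omega(e_1)=(H(e_1)\cap\Omega)\cup(\partial\Omega\cap\overline{\Omega(e_1)})$. On $H(e_1)\cap\Omega$, evenness of $u$ in $x_1$ gives $v\equiv 0$. On $\partial\Omega\cap\overline{\Omega(e_1)}$, since $u\in C^1(\overline\Omega)$ and $u\equiv 0$ on $\partial\Omega$, the gradient is normal and $v=(\partial u/\partial\nu)\,\nu_1$; the $x_1$-convexity of $\Omega$ combined with its symmetry allows me to write $\partial\Omega\cap\{x_1>0\}$ locally as a graph $x_1=b(x')$ with $b\geq 0$, which forces $\nu_1\geq 0$ and $\nu_1>0$ on an open portion. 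The Hopf boundary lemma for uniformly elliptic fully nonlinear viscosity solutions, applied to $u>0$ near $\partial\Omega$ (after linearizing $f(u)=f(0)+\tfrac{f(u)-f(0)}{u}u$ to frame it as an inequality with bounded zeroth-order term), gives $\partial u/\partial\nu<0$ on $\partial\Omega$. Hence $v\leq 0$ on $\partial\Omega\cap\overline{\Omega(e_1)}$, with strict inequality $v<0$ on the open portion where $\nu_1>0$.

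Next I would exhibit a point where $v>0$. Since $u$ is sign changing and $u>0$ near $\partial\Omega$, there is $x_0\in\Omega$ with $u(x_0)<0$, and by evenness I may take $x_0\in\overline{\Omega(e_1)}$. By $x_1$-convexity the whole segment $\{(t,x_0'):0\leq t\leq b(x_0')\}$ lies in $\overline{\Omega(e_1)}$. Along it $u$ takes the negative value $u(x_0)$ at $t=x_0^{(1)}$, is zero at the endpoint $t=b(x_0')$, and is strictly positive slightly inside that endpoint. The mean value theorem then produces some interior point of the segment where $\partial_{x_1}u>0$, i.e.\ $v>0$.

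Finally I would invoke Proposition \ref{key}: the principal eigenfunction $\phi_1^+$ of $\mathcal{L}_u$ in $\Omega(e_1)$ satisfies $-\mathcal{L}_u[\phi_1^+]=\lambda_1^+(\mathcal{L}_u,\Omega(e_1))\phi_1^+\geq 0$ and $\phi_1^+>0$ in $\Omega(e_1)$, so it serves as a positive supersolution; the subsolution $v$ has $v\leq 0$ on $\partial\Omega(e_1)$ and $v(\hat x)>0$ at some $\hat x\in\Omega(e_1)$. Proposition \ref{key} then forces $v\equiv t\phi_1^+$ for some $t>0$, and in particular $v\equiv 0$ on $\partial\Omega(e_1)$, which flatly contradicts the strict inequality $v<0$ established on an open subset of $\partial\Omega\cap\overline{\Omega(e_1)}$. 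The main obstacles I anticipate are (i) the rigorous invocation of the Hopf lemma in the fully nonlinear viscosity setting under hypotheses \refe{ue}--\refe{lip} and $f\in C^1$, and (ii) handling the case where $u$ has different signs near different connected components of $\partial\Omega$, in which neither $u_1$ nor $-u_1$ has the desired one-sided boundary behaviour on all of $\partial\Omega(e_1)$; in the latter case one localizes the argument to a connected component of $\{u>0\}\cap\Omega(e_1)$ (respectively $\{u<0\}\cap\Omega(e_1)$) adjacent to a chosen boundary portion and uses the monotonicity $\lambda_1^+(\Omega(e_1))\leq\lambda_1^+(\omega)$ for $\omega\subset\Omega(e_1)$ to transfer the contradiction back.
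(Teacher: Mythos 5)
Your route is genuinely different from the paper's, and it has a real gap at the Hopf step. The paper proves the contrapositive by working directly with $u_1=\partial u/\partial x_1$ in one cap: the assumption $\mathcal{N}(u)\cap\partial\Omega=\emptyset$ by itself forces $u_1$ to have one sign on $\partial\Omega\cap\partial\Omega(e_1)$ (two boundary points with opposite signs of $u_1$ would produce zeros of $u$ arbitrarily close to $\partial\Omega$), and once the cap is chosen so that $u_1\le 0$ there, the sign change of $u_1$ in $\Omega(e_1)$ yields a nodal region $D$ with $u_1>0$ in $D$, $u_1=0$ on $\partial D$; strict monotonicity of $\lambda_1^+$ plus the maximum principle in $D$ then finishes. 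No Hopf lemma and no Proposition \ref{key} are needed, and no case split on the global sign of $u$ near $\partial\Omega$ is required.

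The Hopf step you rely on is not justified by the hypotheses. To apply a Hopf lemma to $u>0$ near $\partial\Omega$ you would need $u$ to be a supersolution of some $\Mm + c(x)$, i.e.\ $-\Mm(D^2u)-c(x)u\ge 0$; from $-F(x,D^2u)\ge f(u)$ and uniform ellipticity one only gets $-\Mm(D^2u)-c(x)u\ge f(0)+F(x,0)$, whose sign is not controlled. When this quantity is negative, Hopf can actually fail: $u(x)=\cos^2(\pi x/2)$ on $(-1,1)$ solves $-u''=\pi^2 u-\tfrac{\pi^2}{2}$, is positive, vanishes at $\pm 1$, yet $u'(\pm1)=0$. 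So the claimed strict inequality $u_1<0$ on an open boundary portion has no basis. The good news is that you do not need it: once Proposition \ref{key} yields $u_1\equiv t\phi_1^+>0$ in $\Omega(e_1)$ (for which the non-strict boundary information $u_1\le 0$ suffices), the $x_1$-monotonicity of $u$ along the slices $\{(t,x'):0\le t\le b(x')\}$ together with $u(b(x'),x')=0$ forces $u<0$ throughout $\Omega(e_1)$ and hence, by evenness, throughout $\Omega$, contradicting that $u$ changes sign. Replacing your boundary-value contradiction with this one closes the gap. Your second anticipated difficulty (mixed sign of $u$ near different components of $\partial\Omega$) is also real for your set-up, and the fix you sketch — localizing to a nodal region and invoking eigenvalue monotonicity — essentially reverts to the paper's argument, which handles this uniformly by determining the sign of $u_1$ on $\partial\Omega\cap\partial\Omega(e_1)$ without ever fixing the sign of $u$ near the boundary.
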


\proof We follow the argument used in \cite{AP} for  semilinear equations, and we prove the equivalent implication
$$ \mathcal{N}(u)\cap \partial \Omega= \emptyset \quad \Longrightarrow  \quad \la_1^+(\mathcal{L}_u, \Omega(\pm e_1))< 0  \, . 
$$

Let us consider the continuous function $u_1=\frac{\partial u}{\partial x_1}$ in the cap $\Omega(e_1)$. We notice that $u_1=0$ on $H(e_1)\cap \overline \Omega$ and that $u_1$ does not change sign on $\partial \Omega \cap \partial \Omega(e_1)$. Indeed, if there were points $Q_1\,,\ Q_2\in \partial \Omega \cap \partial \Omega(e_1)$ such that $u_1(Q_1)>0$ and $u_1(Q_2)<0$, then, since $u=0$ on $\partial \Omega$, there would exist a sequence of points $\{ x_k\}\subset \Omega$ such that $u (x_k)=0$ for every $k$ and dist$(x_k, \partial \Omega)\to 0$. This would be a contradiction to the hypothesis $\mathcal{N}(u)\cap \partial \Omega=\emptyset$. Hence, either $u_1\leq 0$ or $u_1\geq 0$ on $\partial \Omega \cap \partial \Omega(e_1)$. We can assume without loss of generality to be in the first case, since otherwise we can  consider, by the symmetry of $u$,  the opposite cap $\Omega(-e_1)$. 
We further observe that, by Lemma \ref{diffe} (iii), $u_1$ satisfies in the viscosity sense
\begin{equation}\label{equ1}
-\mathcal{L}_u[u_1] \leq 0\quad \hbox{ in } \Omega\, .
\end{equation}
Furthermore, since $u$ is zero on $\partial \Omega$,   changes sign in $\Omega$ and   is symmetric in the $x_1$--variable, we deduce that $u_1$ must change sign in $\Omega(e_1)$. Then, by the previous consideration on the sign of $u_1$ on $\partial \Omega \cap \partial \Omega(e_1)$, we conclude that there exists an open connected domain $D\subset \Omega(e_1)$ such that $u_1>0$ in $D$ and $u_1=0$ on $\partial D$. Thus, if by contradiction $\la_1^+ (\mathcal{L}_u, \Omega(e_1))\geq 0$, then $\la_1^+(\mathcal{L}_u, D)>0$ and, by \refe{equ1},   the maximum principle would imply the contradiction $u_1\leq 0$ in $D$.

\hfill$\Box$

When $\Omega=B$ is a ball, $F$ satisfies \refe{ri} and $u$ is a radial sign changing solution of \refe{eq2} with a finite number of nodal regions, then the assumption $\mathcal{N}(u)\cap \partial B=\emptyset$ is obviously satisfied. Hence, we can apply Proposition \ref{prop2} for any direction $e\in S^{n-1}$.

Analogously, if $\Omega=B$ is an annulus, though it is a domain not convex with respect to any direction, a proof similar to that of Proposition \ref{prop2} can be applied (see \cite{AP} for more details). Actually, we have the following result.
\begin{corollary}\label{rad-}If $B$ is a bounded radial domain, $F$ satisfies \refe{ue}, \refe{lip} and \refe{ri} and $u$ is a radial sign changing viscosity solution of \refe{eq2}, then
$$
\la_1^+ (\mathcal{L}_u, B(e))<0\quad \forall\, e\in S^{n-1}\, .
$$
\end{corollary}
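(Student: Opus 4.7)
The plan is to fix any direction $e \in S^{n-1}$ and, using the rotational invariance of $F$ and $f$ together with the radial symmetry of $u$, reduce to $e = e_1$ and work with the radial profile $v \in C^1$ defined by $u(x) = v(|x|)$. I would contradict $\lambda_1^+(\mathcal{L}_u, B(e_1)) \geq 0$ by exhibiting a proper subdomain $D \subsetneq B(e_1)$ on which the partial derivative $u_1 = \partial u/\partial x_1$ is positive and vanishes on $\partial D$. Since $-\mathcal{L}_u[u_1] \leq 0$ in $B$ by Lemma \ref{diffe} (iii), the strict monotonicity of principal eigenvalues (Proposition \ref{prl} (i)) combined with the maximum principle characterization (Proposition \ref{prl} (v)) would then force $u_1 \leq 0$ in $D$, yielding the desired contradiction.

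For the ball case, $B = \{|x| < R\}$ is convex in the $x_1$-direction, $u$ is even in $x_1$, and the sign change of $v$ with $v(R) = 0$ forces the nodal set $\mathcal{N}(u)$ to be a finite union of interior spheres $\{|x| = r_i\}$ with $r_i < R$, so $\mathcal{N}(u) \cap \partial B = \emptyset$ and Proposition \ref{prop2} applies directly to give $\lambda_1^+(\mathcal{L}_u, B(e_1)) < 0$. For the annulus case, $B = \{a < |x| < b\}$ is not convex in any direction, so Proposition \ref{prop2} is not directly available, and $D$ must be constructed by hand: since $v \in C^1([a,b])$ vanishes at both endpoints and changes sign, its positive maximum and negative minimum are both attained at interior critical points, and by taking a maximal strictly monotone sub-interval between a local min and a local max one produces $r_1 < r_2$ in $(a,b)$ with $v'(r_1) = v'(r_2) = 0$ and, say, $v' > 0$ on $(r_1, r_2)$. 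Setting $D := \{x \in B : r_1 < |x| < r_2,\ x_1 > 0\}$, a direct computation from $u_1(x) = (x_1/|x|)\, v'(|x|)$ shows that $u_1 > 0$ in $D$ and $u_1 \equiv 0$ on $\partial D$ (on the spherical parts because $v'(r_i) = 0$, on the equatorial part because $x_1 = 0$), and the general scheme above delivers the contradiction.

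The main obstacle is the strict inclusion $D \subsetneq B(e_1)$, i.e.\ ensuring $r_1 > a$ and $r_2 < b$ strictly, which is precisely what makes the eigenvalue monotonicity $\lambda_1^+(\mathcal{L}_u, D) > \lambda_1^+(\mathcal{L}_u, B(e_1))$ strict and makes $u_1$ vanish cleanly on the spherical components of $\partial D$. This rests on the elementary one-dimensional fact that a $C^1$ sign-changing radial profile vanishing on the boundary of its interval of definition attains its extrema strictly inside the interval, hence brackets a strict monotonicity sub-interval away from both endpoints.
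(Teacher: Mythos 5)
Your core strategy---producing a proper subdomain $D\subsetneq B(e_1)$ on which $u_1=\frac{x_1}{|x|}v'(|x|)$ is positive and vanishes on $\partial D$, then invoking strict monotonicity of $\lambda_1^+$ (Proposition~\ref{prl}(i)) together with the maximum-principle characterization (Proposition~\ref{prl}(v)) against Lemma~\ref{diffe}(iii)---is exactly the mechanism driving both Proposition~\ref{prop2} and the paper's proof of this corollary, so you are in essentially the right territory. Your hands-on annular construction is in fact a nice, self-contained replacement for the paper's ``a proof similar to that of Proposition~\ref{prop2} can be applied (see \cite{AP})'' remark, and it transfers unchanged to the ball (take $a=0$, noting $v'(0)=0$ since $u$ is $C^1$ and radial). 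You therefore did not need to invoke Proposition~\ref{prop2} separately in the ball case at all.

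There is, however, one genuine gap. For the ball you assert that ``the sign change of $v$ with $v(R)=0$ forces the nodal set $\mathcal{N}(u)$ to be a \emph{finite} union of interior spheres,'' which is an unjustified assumption on the zero set of $v$: a $C^1$ radial profile can in principle have infinitely many zeros, possibly accumulating at $\partial B$, in which case $\mathcal{N}(u)\cap\partial B\neq\emptyset$ and Proposition~\ref{prop2} does not apply. The paper's own proof treats this possibility explicitly, by restricting $u$ to a radial subdomain $\mathcal{B}\subset B$ where it has exactly two nodal regions and then using strict domain monotonicity $\lambda_1^+(\lpu,B(e))<\lambda_1^+(\lpu,\mathcal{B}(e))<0$. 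Your annular construction actually sidesteps the issue (it only needs a connected component of $\{v'\neq 0\}$ compactly contained in $(a,b)$), so the cleanest fix is to drop the appeal to Proposition~\ref{prop2} entirely and run your interval argument uniformly. Relatedly, the ``elementary one-dimensional fact'' as you state it---that interior extrema automatically bracket a strictly monotone interval away from both endpoints---is a little loose; what actually works is this: if every connected component of the open set $\{v'>0\}$ and of $\{v'<0\}$ touched $a$ or $b$, there would be at most two such components total, forcing $v$ to be either monotone or single-humped on $[a,b]$, contradicting $v(a)=v(b)=0$ together with the sign change. That gives the compactly contained monotonicity interval you need, and (using that both $u_1$ and $-u_1$ are subsolutions by Lemma~\ref{diffe}(iii)) covers both possible signs of $v'$ there.
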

\proof If $u$ has a finite number of nodal regions, then the conclusion follows directly from Proposition \ref{prop2} and the above considerations. If not, there exists a radial subdomain $\mathcal{B}\subset B$ in which $u$ has exactly two nodal regions. Hence, $\la^+_1(\lpu, B(e))<\la^+_1(\lpu, \mathcal{B}(e))<0$.

\hfill$\Box$

Finally, some extra considerations can be done for the special case of planar domains $\Omega\subset \R^2$ which are symmetric and convex with respect to two orthogonal directions, say $e_1=(1,0)$ and $e_2=(0,1)$. Note that this kind of domains need not to be convex, but they can be easily proved to be star--shaped with respect to the origin. 

Let us call doubly symmetric a continuous function $u$ which is symmetric with respect to  both directions $e_i$, $i=1,2$, i.e. a continuous function $u$ which is even in the variables $x_1$ and $x_2$. For such functions we have the following result.
\begin{lemma}\label{ds} Let $\Omega\subset \R^2$ be a domain  symmetric and convex  with respect to $e_i$, $i=1,2$, and let $u\in C(\overline \Omega)$ be a  sign changing, doubly symmetric function with two nodal regions. Then, $\mathcal{N}(u)\cap \partial \Omega=\emptyset$ and $0\notin \mathcal{N}(u)$, that is the nodal line of $u$ neither touches $\partial \Omega$ nor passes through the origin.
\end{lemma}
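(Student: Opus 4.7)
Write $\Omega^+ = \{u > 0\}$ and $\Omega^- = \{u < 0\}$ for the two nodal regions; both are open, nonempty, connected, and doubly symmetric. The axial convexity together with the double symmetry of $\Omega$ imply that the coordinate-axis segments $A_1 := (-a_1, a_1) \times \{0\}$ and $A_2 := \{0\} \times (-a_2, a_2)$ lie entirely inside $\Omega$ (with suitable $a_1, a_2 > 0$), and each one disconnects $\Omega$ into its two natural half-domains. The key preliminary observation is that each of $\Omega^+, \Omega^-$ intersects both positive semi-axes: given any interior point $P = (p_1, p_2) \in \Omega^\pm$ with $p_1 p_2 > 0$ (which exists by double symmetry and openness of $\Omega^\pm$), any continuous path inside $\Omega^\pm$ joining $P$ to $\sigma_{e_1}(P)$ must cross $A_2$, and likewise for $A_1$.

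I would tackle the statement $0 \notin \mathcal{N}(u)$ first. Suppose for contradiction $u(0) = 0$. The observation above yields $(0, q_+) \in \Omega^+$ and $(0, q_-) \in \Omega^-$ with $q_\pm > 0$; applying the intermediate value theorem to $t \mapsto u(0, t)$ on $[0, \max(q_+, q_-)]$ produces an interior zero $(0, q^*)$ with $q^* > 0$, and by $\sigma_{e_2}$-symmetry also $(0, -q^*) \in \mathcal{N}(u)$. The same argument on the $x_1$-axis gives zeros $(\pm r^*, 0)$. Combined with the zero at the origin, these points impose an alternating sign pattern along $A_1 \cup A_2$, which, propagated by $\sigma_{e_1}$ and $\sigma_{e_2}$, must be realised in the four open quadrant-neighbourhoods of the origin. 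A sign-counting argument then shows that the two connected doubly-symmetric sets $\Omega^+, \Omega^-$ alone cannot accommodate this pattern, forcing at least a third connected component of $\Omega \setminus \mathcal{N}(u)$ and contradicting the two-region hypothesis. The first claim $\mathcal{N}(u) \cap \partial \Omega = \emptyset$ is handled analogously: a boundary zero $P$ generates a nontrivial orbit in $\mathcal{N}(u) \cap \partial \Omega$ by double symmetry, and the same path/intermediate-value scheme yields axial zeros whose joint effect with the boundary zeros again enforces at least three nodal components.

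The main obstacle is the last step in each argument, namely converting the forced extra zeros into an actual third connected component of $\Omega \setminus \mathcal{N}(u)$. Making this rigorous requires local control of the nodal set around each such zero: for a doubly symmetric continuous function whose zero set has no isolated points, the fourfold-symmetric sign pattern genuinely separates a neighbourhood into alternating sign sectors, producing the extra component. This local analysis is the delicate part, and is where the context of $u$ arising as a solution of an elliptic equation (so that $\mathcal{N}(u)$ is locally one-dimensional by unique continuation, ruling out degenerate isolated zeros) is tacitly used.
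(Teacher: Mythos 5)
Your approach is genuinely different from the paper's, and the gap you flag at the end is real and cannot be closed in the way you propose. The lemma is a purely topological statement about an arbitrary continuous, doubly symmetric function on $\Omega$ having two nodal regions; there is no PDE hypothesis on $u$. It is deliberately decoupled from the equation, which enters only afterwards in Corollary \ref{ds1}. You therefore cannot appeal to unique continuation or to the nodal set being ``locally one-dimensional'': for a general continuous $u$ with two nodal regions the zero set may have isolated points, interior, or other wild local structure, and the conclusion must hold anyway. (Even with a PDE in the background, such fine nodal-set regularity is delicate and certainly not available for viscosity solutions of fully nonlinear operators, so the patch would not be justified there either.) Also, the ``sign-counting'' claim itself is not spelled out: having zeros of $u$ at the origin and at points $(\pm r^*,0)$, $(0,\pm q^*)$ does not by itself prevent $\Omega^+$ and $\Omega^-$ from weaving around those points while each remaining connected, so something more than counting axial sign changes is required.

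The paper's proof avoids any local analysis of $\mathcal{N}(u)$ by a global Jordan-curve argument. Take $P_1\in\Omega^+$ off both axes and its three reflections $P_2,P_3,P_4\in\Omega^+$; since $\Omega^+$ is open, connected, and doubly symmetric, these four points can be joined by a simple closed curve $\gamma^+\subset\Omega^+$ not passing through the origin. By the Jordan curve theorem $\R^2\setminus\gamma^+$ has two components, $D_1\ni 0$ and $D_2\supset\partial\Omega$. Since $\Omega^-$ is connected and disjoint from $\gamma^+$, it lies entirely in $D_1\cap\Omega$ or in $D_2\cap\Omega$; in the first case the annular region $D_2\cap\Omega$ between $\gamma^+$ and $\partial\Omega$ contains no points of $\Omega^-$ (and, by the two-region hypothesis, no third component), so $\mathcal{N}(u)$ cannot reach $\partial\Omega$, while in the second case one runs the same construction with $\gamma^-\subset\Omega^-$, which now shields $\partial\Omega$. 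Swapping the roles of the origin and $\partial\Omega$ gives $0\notin\mathcal{N}(u)$. This uses only that $\Omega^\pm$ are connected, open, doubly symmetric planar sets — exactly the hypotheses given. If you want to keep your axis-crossing idea, the productive move is still to feed it into a Jordan-curve/separation argument rather than to try to control the local geometry of $\mathcal{N}(u)$.
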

\proof Let us define $\Omega^+=\{ x\in \Omega\, :\, u(x)>0\}$ and $\Omega^-=\{ x\in \Omega\, :\, u(x)<0\}$.  By assumption, both $\Omega^\pm$ are connected open sets, hence  connected by arcs, and  symmetric with respect to $H(e_i)$, $i=1,2$. 

Let us consider a point $P_1\in \Omega^+\setminus \left( H(e_1)\cup H(e_2)\right)$ and let $P_2, P_3, P_4\in \Omega^+$ be the reflected points of $P_1$ with respect to $H(e_1)$, $H(e_2)$ and to the origin. Then, there exists a simple, closed curve $\gamma^+$ joining $P_1$, $P_2$, $P_3$ and $P_4$ and contained in $\Omega^+$, so that $u>0$ on $\gamma^+$. Obviously we can choose $\gamma^+$  not passing through the origin. By the Jordan curve theorem, $\R^2\setminus \gamma^+$ has two connected components, which we call $D_1$ and $D_2$, $D_1$ being the connected component containing the origin  and $D_2$ the one which contains $\partial \Omega$. Since $u$ has only two nodal regions, it follows that either $\Omega^-\subset D_1\cap \Omega$ or $\Omega^-\subset D_2\cap \Omega$. In the former case we immediately deduce that $\mathcal{N}(u)\cap \partial \Omega=\emptyset$. In the latter case, we can repeat the above construction in $\Omega^-$, that is we take in $\Omega^-$ four distinct symmetric points $Q_i$, $i=1,2,3,4$ as before, and select a simple closed curve $\gamma^-\subset \Omega^-$ passing through them. Again the Jordan curve theorem implies that $\R^2\setminus \gamma^-$ has two connected components, say $A_1$ which contains $\partial \Omega$, and $A_2$ which contains both $\gamma^+$ and the origin. Then, $\Omega^+$ must be contained in $A_2$, so that $u$ is negative in a neighborhood of $\partial \Omega$ and, again, $\mathcal{N}(u)\cap \partial \Omega=\emptyset$. A similar argument shows also that $0\notin \mathcal{N}(u)$.

\hfill$\Box$

As a  consequence of the previous lemma and Proposition \ref{prop2} we deduce the following
\begin{corollary}\label{ds1}
Assume that $u\in C^1(\overline \Omega)$ is a viscosity solution of \refe{eq2}, with $u$ and $\Omega$ as in Lemma \ref{ds} and $F$ satisfying \refe{ue}, \refe{lip} and \refe{sime} for $e=e_i$, $i=1,2$. Then
$$
\la_1^+(\mathcal{L}_u, \Omega(\pm e_i))<0\quad \hbox{ for } i=1,2\, .
$$
\end{corollary}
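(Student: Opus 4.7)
The plan is to deduce Corollary \ref{ds1} by chaining together Lemma \ref{ds} with the contrapositive form of Proposition \ref{prop2}. Since $u$ is assumed to be as in Lemma \ref{ds}, it is sign changing, doubly symmetric, and has exactly two nodal regions; Lemma \ref{ds} then immediately gives $\mathcal{N}(u)\cap \partial \Omega=\emptyset$. This is exactly the conclusion that Proposition \ref{prop2} shows cannot coexist with $\la_1^+(\mathcal{L}_u, \Omega(\pm e))\geq 0$, so it only remains to check that the hypotheses of Proposition \ref{prop2} are met for $e=e_1$ and $e=e_2$.

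For $e=e_1$, the domain $\Omega$ is symmetric with respect to $H(e_1)$ and convex in the $x_1$-direction (these are part of the assumptions on $\Omega$ in Lemma \ref{ds}), the function $u$ is even in $x_1$ because it is doubly symmetric, and $F$ satisfies \refe{sime} with $e=e_1$ by hypothesis. Hence Proposition \ref{prop2} applies, and its contrapositive combined with $\mathcal{N}(u)\cap \partial \Omega=\emptyset$ forces $\la_1^+(\mathcal{L}_u, \Omega(\pm e_1))<0$. For $e=e_2$ the argument is verbatim the same after exchanging the roles of the two coordinate axes, using the symmetry of $\Omega$ in $e_2$, the evenness of $u$ in $x_2$, and the invariance \refe{sime} of $F$ under $\sigma_{e_2}$. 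This delivers $\la_1^+(\mathcal{L}_u, \Omega(\pm e_2))<0$ as well, proving the corollary.

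There is no real obstacle here beyond bookkeeping, since the analytic work has already been done in Proposition \ref{prop2}; the only thing worth flagging is that Proposition \ref{prop2} was written for the distinguished direction $e_1$, so one must be comfortable invoking it a second time with $e_2$ playing the role of $e_1$. This is legitimate because the proposition's proof only uses symmetry, convexity, and invariance in a single prescribed direction, all of which are symmetric between $e_1$ and $e_2$ in our setting, so no additional argument is required.
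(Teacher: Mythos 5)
Your proof is correct and matches the paper's intent exactly: the paper states the corollary as an immediate consequence of Lemma \ref{ds} and Proposition \ref{prop2}, and you chain them in precisely the same way, applying Proposition \ref{prop2} (whose proof in fact directly establishes the contrapositive implication you invoke) once for each of the two distinguished directions. Your remark that Proposition \ref{prop2} was stated for $e_1$ but applies equally well with $e_2$ in that role is the only bookkeeping point, and it is handled correctly.
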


\section{Foliated Schwarz symmetry for viscosity solutions}\label{sec2}

The aim of this section is to establish either full radial  symmetry or partial symmetry properties, such as foliated Schwarz symmetry, for  viscosity solutions of fully nonlinear elliptic equations in bounded radial domains. Thus, we focus on solutions of the problem
\begin{equation}\label{eq3}
\left\{\begin{array}{c}
-F(x, D^2u)= f(|x|, u)\quad \hbox{ in } B\, ,\\[1ex]
u=0 \quad \hbox{ on } \partial B\, ,
\end{array}
\right.
\end{equation}
and the operator $F$ will be always assumed to satisfy \refe{ue}, \refe{lip} and \refe{ri}. 

As a first result, which easily follows from Lemma \ref{diffe} (ii), let us prove   the radial symmetry  of the usually called "stable" solutions.

\begin{theorem}\label{stable} Let $u$ be a viscosity solution of problem \refe{eq3} such that   $\lambda_1^+(\mathcal{L}_u, B)\geq 0$. Then, $u$ is radially symmetric in $B$.
\end{theorem}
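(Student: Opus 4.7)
The strategy is to show that every tangential derivative of $u$ along rotations fixing the origin vanishes identically in $B$, so that $u$ must depend only on $|x|$. Fix two linearly independent unit vectors $e,e'\in S^{n-1}$ and set $v=u_{\theta_{e,e'}}$; by standard $C^1$--regularity $v\in C(\overline B)$ (with the convention $v(0)=0$ when $B$ is a ball). By Lemma \ref{diffe}(ii), both $v$ and $-v$ are viscosity subsolutions of $-\lpu[\cdot]\leq 0$ in $B$. Since $u\equiv 0$ on the rotationally invariant boundary $\partial B$ and $\theta_{e,e'}$ is a tangential direction to $\partial B$, we also have $v=0$ on $\partial B$.

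Let $\phi_1^+$ be a positive principal eigenfunction associated to $\lambda_1^+(\lpu,B)\geq 0$, so that $-\lpu[\phi_1^+]=\lambda_1^+(\lpu,B)\,\phi_1^+\geq 0$ in $B$ and $\phi_1^+>0$ in $B$. Suppose, toward a contradiction, that $v(\hat x)>0$ at some $\hat x\in B$. Proposition \ref{key}, applied with $\phi_1^+$ in the role of the positive supersolution and $v$ in the role of the subsolution with zero boundary data, then forces $v\equiv t\,\phi_1^+$ for some $t>0$; in particular $v>0$ throughout $B$. This is incompatible with the $2\pi$--periodicity of $u$ in the angular variable $\theta_{e,e'}$: for every $x\in B$ not lying on the axis $\Pi(e,e')^\perp$, rotational invariance of $B$ guarantees that the rotation orbit of $x$ in the plane $\Pi(e,e')$ is a circle entirely contained in $B$, and integration of $v=u_{\theta_{e,e'}}$ along such a circle gives $\oint v\,d\theta_{e,e'}=0$, contradicting $v>0$. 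Hence $v\leq 0$ in $B$; applying the same reasoning to $-v$ yields $v\equiv 0$ in $B$.

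Since $e,e'\in S^{n-1}$ were arbitrary, every derivative of $u$ along a rotational orbit vanishes, meaning $u$ is constant on every sphere centered at the origin, which is precisely the claimed radial symmetry.

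The main obstacle I anticipate is the borderline case $\lambda_1^+(\lpu,B)=0$, where the plain maximum principle of Proposition \ref{prl}(v) is not available. The key substitute is Proposition \ref{key}, which rigidifies any non--trivial subsolution with vanishing boundary data into a positive multiple of the principal eigenfunction; once this sign--definite alternative is reached, the global periodicity of $u$ along rotational orbits supplies the contradiction essentially for free, and the strictly positive case $\lambda_1^+>0$ is subsumed by the same argument.
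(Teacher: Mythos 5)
Your proof is correct and follows essentially the same route as the paper's: both rely on Lemma \ref{diffe}(ii) to obtain the subsolution property for $\pm u_{\theta_{e,e'}}$, invoke Proposition \ref{key} to pass from $\lambda_1^+(\lpu,B)\geq 0$ to the dichotomy $v\leq 0$ or $v>0$, and use $2\pi$-periodicity in $\theta_{e,e'}$ to force the zero. The paper closes the case $v\leq 0$ via the strong maximum principle (so it gets the trichotomy $v<0$, $v>0$, $v\equiv 0$), while you instead run the same argument on $-v$; this is a cosmetic rearrangement, not a different method.
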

\proof Let us fix any pair of linearly independent unit vectors $e, e'\in S^{n-1}$, and let us set $\theta=\theta_{e,e'}$. Then, by Lemma \ref{diffe} (ii) and the boundary condition in \refe{eq3}, the derivative $u_\theta$ satisfies, in the viscosity sense,
$$
\left\{\begin{array}{c}
-\mathcal{L}_u[u_\theta] \leq 0\quad \hbox{ in } B\\[1ex]
u_\theta=0 \quad \hbox{ on } \partial B
\end{array}\right.
$$
The assumption  $\lambda_1^+(\mathcal{L}_u, B)\geq 0$ implies that either $u_\theta\leq 0$ in $B$, or, by Proposition \refe{key}, $u_\theta>0$ in $B$. Moreover, in the first case, by the strong maximum principle, either $u_\theta<0$ in $B$ or $u_\theta \equiv 0$ in $B$. Therefore, three are the possible cases: $u_\theta<0$, $u_\theta>0$ or $u_\theta\equiv 0$ in $B$. But since $u$ is $2\pi$--periodic with respect to $\theta$, its derivative $u_\theta$ has to vanish somewhere in $B$. Hence, $u_\theta\equiv 0$ in $B$, and the arbitrariness of $e$ and $e'$ implies that $u$ is radially symmetric.

\hfill$\Box$

The definition of foliated Schwarz symmetric functions was recalled in the Introduction, see Definition \ref{fss}. Let us now
give some characterizations.

\begin{lemma}\label{fssc0} A function $u\in C(\overline{B})$ is foliated Schwarz symmetric if and only if for every $e\in S^{n-1}$ one has either $u(x)\geq u(\sigma_e(x))$ in $B(e)$ or $u(x)\leq u(\sigma_e(x))$ in $B(e)$. More precisely, $u$ is foliated Schwarz symmetric with respect to the direction $p\in S^{n-1}$ if and only if $u(x)\geq u(\sigma_e(x))$ for all $x\in B(e)$ and for every $e\in S^{n-1}$ such that $e\cdot p\geq 0$.
\end{lemma}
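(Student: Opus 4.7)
The plan is to handle the two implications separately. For the direction $\Rightarrow$, I would assume $u$ is foliated Schwarz symmetric with respect to $p\in S^{n-1}$, so that $u(x)=g(|x|,\theta(x))$ with $\theta(x)=\arccos(x\cdot p/|x|)$ and $g(r,\cdot)$ non-increasing on $(0,\pi)$. Fixing $e\in S^{n-1}$ with $e\cdot p\geq 0$ and $x\in B(e)$, the identity $\sigma_e(x)\cdot p=x\cdot p-2(x\cdot e)(e\cdot p)$ together with $x\cdot e>0$ gives $\sigma_e(x)\cdot p\leq x\cdot p$; since $|\sigma_e(x)|=|x|$ and $\arccos$ is decreasing, this means $\theta(\sigma_e(x))\geq \theta(x)$, and the monotonicity of $g$ yields $u(\sigma_e(x))\leq u(x)$. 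Applying the same argument to $-e$ when $e\cdot p<0$ produces the reverse inequality on $B(e)$, which also gives the less precise ``either/or'' form.

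For the converse, I would set $P:=\{e\in S^{n-1}:u(x)\geq u(\sigma_e(x))\ \forall\, x\in B(e)\}$, which is closed in $S^{n-1}$ by continuity of $u$. The hypothesis reads $P\cup(-P)=S^{n-1}$. If $P=S^{n-1}$, every reflection preserves $u$; since reflections generate $O(n)$, $u$ is radial and hence foliated Schwarz symmetric in any direction. Otherwise $u$ is non-radial, so one can select $r>0$ with $\partial B_r\subset B$ on which $u|_{\partial B_r}$ is non-constant, and consider the maximum set $A:=\{x\in \partial B_r:u(x)=\max_{\partial B_r} u\}$, which is a proper, closed, nonempty subset of the sphere.

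The key step is then to produce the symmetry axis $p$. I would first observe that $A$ inherits a polarization property in every direction: for $e\in P$, combining $u\geq u\circ\sigma_e$ on $B(e)\cap\partial B_r$ with the maximality of $u$ on $A$ forces $\sigma_e(A\cap B(-e))\subseteq A\cap B(e)$, and symmetrically for $e\in -P$. By the classical characterization of cap-symmetric sets from the theory of polarization rearrangements (in the spirit of Brock, and of Pacella--Weth in the semilinear analogue), $A$ must then be a spherical cap; I take $p\in S^{n-1}$ to be its center. A direct computation with the identity $\sigma_e(x)\cdot p=x\cdot p-2(x\cdot e)(e\cdot p)$ shows $P\subseteq\{e:e\cdot p\geq 0\}$ and $-P\subseteq\{e:e\cdot p\leq 0\}$, and combining these with $P\cup(-P)=S^{n-1}$ and the closedness of $P$ yields the precise identification $P=\{e:e\cdot p\geq 0\}$.

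Once $P=\{e:e\cdot p\geq 0\}$ is known, the foliated Schwarz symmetry of $u$ with direction $p$ follows in two steps: (i) for any $e\perp p$, both $e,-e\in P$ yield $u\circ\sigma_e=u$, and since such reflections generate the stabilizer of $p$ in $O(n)$, $u$ is axially symmetric about $\R p$; (ii) for $x_1,x_2$ on the same sphere with $\theta(x_1)<\theta(x_2)$, the unit vector $e=(x_1-x_2)/|x_1-x_2|$ satisfies $e\cdot p>0$ and $x_1\in B(e)$, whence $u(x_1)\geq u(\sigma_e(x_1))=u(x_2)$, giving the required monotonicity in $\theta$. The genuine obstacle is therefore the identification of the axis $p$: it rests on the classical but non-trivial fact that a subset of the sphere polarized in every direction is a spherical cap, which is the main ingredient I would import from rearrangement theory.
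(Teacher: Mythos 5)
The paper does not give a proof of Lemma \ref{fssc0}: right after the statement it says the property ``was first stated in \cite{Br} and for a detailed proof we refer to \cite{W}. A different proof for solutions of semilinear elliptic equations can be found in \cite{GPW}.'' So there is no in-paper proof to compare your sketch against, only these references.

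On the merits of your sketch: the forward implication is fine and elementary, exactly as you compute. In the converse, the surrounding machinery is also set up correctly: the definition and closedness of $P$, the disposal of the radial case $P=S^{n-1}$, the derivation that the max set $A$ on a chosen sphere $\partial B_r$ satisfies $\sigma_e(A\cap B(-e))\subseteq A$ whenever $e\in P$, the deduction $P\subseteq\{e:e\cdot p\geq 0\}$ once $A$ is known to be a cap centered at $p$ (this uses that, for $e\cdot p<0$, the boundary circle of the cap must enter $B(-e)$ and its reflection leaves the cap), the identification $P=\{e:e\cdot p\geq 0\}$ by closedness, the axial symmetry via reflections $\sigma_e$ with $e\perp p$ generating the stabilizer of $p$, and the monotonicity in $\theta$ by the explicit choice $e=(x_1-x_2)/|x_1-x_2|$. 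All of that is sound.

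The genuine gap, which you flag yourself, is the central geometric claim: that a closed, proper, nonempty subset $A\subset S^{n-1}$ with the two-sided polarization property (for each $e$, either $\sigma_e(A\cap B(-e))\subseteq A$ or $\sigma_e(A\cap B(e))\subseteq A$, with no a priori control over which alternative holds) must be a spherical cap. This is not a cleanly citable off-the-shelf fact: it is essentially the level-set formulation of Lemma \ref{fssc0} itself (applied to an indicator function), and proving it amounts to re-doing the hard part of the lemma — one must still show that the set of ``good'' directions for $A$ is a closed hemisphere, which is the same kind of connectedness/rotating-plane argument one would use to prove the lemma directly for $u$. The treatments the paper points to (Brock \cite{Br}, Weth \cite{W}) work directly with the function $u$ and its polarizations rather than through the max set on a single sphere, precisely in order to establish the existence of the axis; your reduction changes the packaging but leaves the pivotal step asserted rather than proved. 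So this is a correct high-level plan with the key lemma outstanding, not a complete proof.
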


 This property was first stated in \cite{Br} and for a detailed proof we refer to \cite{W}. A different proof for solutions of semilinear elliptic equations can be found in \cite{GPW} (see also \cite{PR}).

On the other hand, for differentiable functions, the foliated Schwarz symmetry can be characterized as a sign property of the derivative $u_{\theta_{e,e'}}$, for linearly independent unit vectors $e, e'\in S^{n-1}$. 

\begin{proposition}\label{fssc1} A function $u\in C^1(B)\cap C(\overline{B})$ is foliated Schwarz symmetric if and only if there exists a direction $e\in S^{n-1}$ such that $u$ is symmetric with respect to $H(e)$ and for any other direction $e'\in S^{n-1}\setminus \{\pm e\}$ one has either $u_{ \theta_{e,e'}} \geq 0$ in $B(e)$ or  $u_{\theta_{e,e'}}\leq 0$ in $B(e)$.
\end{proposition}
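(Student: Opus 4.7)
I would prove the two implications separately, using Lemma~\ref{fssc0} as the bridge between the integral formulation of foliated Schwarz symmetry and the infinitesimal angular derivative.

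\emph{Necessity.} Suppose $u$ is foliated Schwarz symmetric with axis $p\in S^{n-1}$. Any $e\in S^{n-1}$ with $e\perp p$ satisfies $\pm e\cdot p=0$, so Lemma~\ref{fssc0} applied to both $e$ and $-e$ yields $u\circ\sigma_e=u$. For $e'\neq\pm e$, let $\tilde e'$ be a unit vector in $H(e)\cap\Pi(e,e')$, and factor the rotation $R_\alpha$ in $\Pi(e,e')$ as $R_\alpha=\sigma_{f_{\alpha/2}}\circ\sigma_e$ with $f_{\alpha/2}=\cos(\alpha/2)e+\sin(\alpha/2)\tilde e'$. Then $u\circ\sigma_e=u$ gives
\[
u(R_\alpha x)-u(x)=u(\sigma_{f_{\alpha/2}}\sigma_e x)-u(\sigma_e x),
\]
and since $f_{\alpha/2}\cdot p=\sin(\alpha/2)(\tilde e'\cdot p)$ has a definite sign for small $\alpha\neq 0$, applying Lemma~\ref{fssc0} to $\pm f_{\alpha/2}$ shows the right-hand side has a definite sign. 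Dividing by $\alpha$ and letting $\alpha\to 0^\pm$ yields the constant sign of $u_{\theta_{e,e'}}$ on $B(e)$, equal to the sign of $\tilde e'\cdot p$.

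\emph{Sufficiency.} Conversely, assume $u$ is $H(e)$-symmetric and $u_{\theta_{e,e'}}$ has constant sign in $B(e)$ for every $e'\neq\pm e$. Since $u\in C^1(B)$, direct computation of the infinitesimal rotation in $\Pi(e,\hat n)$ for $\hat n\in H(e)\cap S^{n-1}$ gives
\[
u_{\theta_{e,\hat n}}(x)=\hat n\cdot q(x),\qquad q(x):=(x\cdot e)\nabla u(x)-\bigl(\nabla u(x)\cdot e\bigr)\,x\in H(e).
\]
The hypothesis then says $\hat n\cdot q(x)$ has constant sign on $B(e)$ for every $\hat n\in H(e)\cap S^{n-1}$, which forces all nonzero values of $q$ to lie on a single positive ray in $H(e)$: if $q(x_1)$ and $q(x_2)$ were either linearly independent or antiparallel, one could select $\hat n$ making $\hat n\cdot q(x_1)$ and $\hat n\cdot q(x_2)$ of opposite signs. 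Call $p\in H(e)\cap S^{n-1}$ the common direction; if $q\equiv 0$ then $\nabla u\in\mathrm{span}(x)$, so $u$ is radial and the conclusion is trivial.

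Rearranging $q(x)=\lambda(x)p$ with $\lambda(x)\geq 0$ gives $\nabla u(x)\in\mathrm{span}(x,p)$ throughout $B(e)$, so $u$ depends only on $|x|$ and $x\cdot p$ there, and by the $H(e)$-symmetry on all of $B$. Hence $u$ is axially symmetric about $p$. The sign choice $u_{\theta_{e,p}}(x)=\lambda(x)\geq 0$ in $B(e)$ translates, in the plane $\Pi(e,p)$, to $u$ being non-decreasing as one rotates from $e$ toward $p$; combined with axial symmetry this yields $u$ non-increasing in the polar angle $\arccos(x\cdot p/|x|)$, verifying Definition~\ref{fss}. The main delicate point is the linear-algebra step selecting the unique axis $p$ from the simultaneous sign-constancy of $\hat n\cdot q(x)$ over all $\hat n\in H(e)\cap S^{n-1}$; the remaining steps are standard manipulations of directional derivatives together with the integrability of the distribution $\mathrm{span}(x,p)$.
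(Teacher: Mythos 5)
Your proof is correct, and while the necessity direction stays close in spirit to the paper's (both reduce to Lemma~\ref{fssc0}; you factor $R_\alpha=\sigma_{f_{\alpha/2}}\circ\sigma_e$ and differentiate, the paper directly picks, for $x,x'\in B(e)$ on the same circle with $0<\theta\leq\theta'<\pi$, the reflection $\sigma_\nu$ carrying $x'$ to $x$ with $\nu\cdot p$ of controlled sign), your sufficiency argument is a genuinely different route. The paper returns to Lemma~\ref{fssc0} and, for an arbitrary $e'$, performs an explicit polar-coordinate case analysis (splitting $B(e')$ into $B(e')\cap B(e)$ and $B(e')\setminus B(e)$, and within each part distinguishing $\theta\gtrless 2\theta_0$) to verify $u\geq u\circ\sigma_{e'}$ or the reverse on $B(e')$ from the sign of $u_\theta$ on $B(\pm e)$ plus periodicity and $H(e)$-symmetry. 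You instead identify $u_{\theta_{e,\hat n}}=\hat n\cdot q$ with $q(x)=(x\cdot e)\nabla u-(\nabla u\cdot e)x\in H(e)$, show that sign-constancy of $\hat n\cdot q$ on $B(e)$ for every $\hat n\in H(e)\cap S^{n-1}$ confines the nonzero values of $q$ to a single ray $\R_{\geq 0}\,p$ (the linear-algebra step is sound: two values that are linearly independent or antiparallel admit a test vector $\hat n$ separating their signs), and hence $\nabla u\in\mathrm{span}(x,p)$ on $B(e)$. The one point you should make explicit is the passage from $\nabla u(x)\in\mathrm{span}(x,p)$ to axial symmetry: on $B(e)$ the vectors $x$ and $p$ are automatically independent, so the tangent distribution $\mathrm{span}(x,p)^\perp$ is exactly the tangent to the foliation by the connected sets $\{|x|=r,\ x\cdot p=s\}\cap B(e)$ (open hemispheres of $(n-2)$-spheres for $n\geq 3$, singletons for $n=2$), and $u\in C^1$ being orthogonal to this distribution is constant on each leaf; the extension to $B(-e)$ then uses the assumed $H(e)$-symmetry. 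With that spelled out your argument is complete: it trades the paper's elementary but lengthy case-by-case verification of the reflection criterion for a shorter structural argument resting on the integrability of the axis foliation, and it bypasses Lemma~\ref{fssc0} altogether in the converse direction.
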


Let us recall that the sufficiency of this condition was already observed in \cite{DGP} and \cite{PW}, but let us  include the proof for the sake of completeness.

\proof Let $u\in C^1(B)\cap C(\overline{B})$ be foliated Schwarz symmetric with respect to a direction $p\in S^{n-1}$, and let us fix $e\in S^{n-1}$ such that $e\cdot p=0$. Then, $u$ is clearly symmetric with respect to $H(e)$. Moreover, let $e'\in S^{n-1}$, with $e'\neq \pm e$. In order to show that either $u_{\theta_{e,e'}}\geq 0$ or $u_{\theta_{e,e'}}\leq 0$ in $B(e)$, we can assume that $e'\cdot e=0$ and that $\theta_{e,e'}\in [-\pi,\pi]$ is the angle formed by $e'$ and the orthogonal projection of $x$ on the plane $\Pi(e,e')$. We claim that if $e'\cdot p\geq 0$ then $u_{\theta_{e,e'}}\leq 0$ in $B(e)$, whereas if $e'\cdot p\leq 0$ then $u_{\theta_{e,e'}}\geq 0$ in $B(e)$. 

Indeed, using cylindrical coordinates with respect to $\Pi(e,e')$, let $x=(r, \theta, \eta)$ and $x'=(r, \theta', \eta)$ be in $B(e)$, for some $r>0$, $\eta\in \R^{n-2}$ and $0<\theta \leq \theta'<\pi$. Then, there exists $\nu\in S^{n-1}\cap \Pi(e,e')$ such that $x'=\sigma_\nu(x)$ ,  $\nu\cdot x>0$ and $\nu\cdot p>0$ if $p\cdot e'>0$, whereas $\nu\cdot p<0$ if $p\cdot e'<0$. Hence, by Lemma \ref{fssc0}, one has $u(x)\geq u(x')$ and therefore  $u_{\theta_{e,e'}}\leq 0$ in $B(e)$ provided that $p\cdot e'\geq 0$, as well as $u_{\theta_{e,e'}}\geq 0$ in $B(e)$ if $p\cdot e'\leq 0$. 

Conversely, assume that there exists $e\in S^{n-1}$such that $u$ is symmetric with respect to $H(e)$, and for every $e'\in S^{n-1}\setminus \{ \pm e\}$ the derivative $u_{\theta_{e,e'}}$ does not change sign in $B(e)$. Up to a rotation, we can assume that $e=e_2=(0,1,\ldots ,0)$.  Again by Lemma \ref{fssc0}, we have to prove  that for any $e'\in S^{n-1}$  either $u(x)\geq u(\sigma_{e'}(x))$ or $u(x)\leq u(\sigma_{e'}(x))$ for all $x\in B(e')$. By assumption if $e'=\pm e_2$ then $u(x)=u(\sigma_{e'}(x))$ , so we can assume $e'\neq \pm e_2$. Moreover, again up to a rotation around the $e_2$-axis, we can suppose that $e'$ lays on the $x_1x_2$-plane, with $e'=(\cos \theta_0, \sin \theta_0,\ldots ,0)$ for some $\theta_0\in \left( -\frac{\pi}{2}, \frac{\pi}{2}\right)$. Thus, the plane $\Pi (e, e')$ coincides with the $x_1x_2$-plane and let us denote just with $\theta\in [-\pi,\pi]$ the polar angle coordinate $\theta_{e,e'}$ given by the angle formed by the projection on the $x_1x_2$-plane with $e_1$. By assumption, we have that either $u_\theta (x)\geq 0$ or $u_\theta (x)\leq 0$ for all $x\in B$ with $x_2>0$. Moreover, using polar coordinates in the $x_1x_2$-plane,  the reflection map $\sigma_{e'}$ may be written as
$$
\sigma_{e'}(r\cos \theta, r\sin \theta, \tilde{x})= (r\cos (2\theta_0-\theta+\pi), r\sin (2\theta_0-\theta+\pi), \tilde{x})\, ,
$$
with $\tilde{x}=(x_3,\ldots,x_n)\in \R^{n-2}$. 

Now, we claim that $u(x)\leq u(\sigma_{e'}(x))$ in $B(e')$ if $u_\theta \geq 0$ in $B(e_2)$, whereas $u(x)\geq u(\sigma_{e'}(x))$ in $B(e')$ provided that $u_\theta \leq 0$ in $B(e_2)$. Indeed, assume that $u_\theta \geq 0$ in $B(e_2)$, and let us take first $x\in B(e')\cap B(e_2)$. Thus, we have $x=(r\cos \theta, r\sin \theta, \tilde{x})$ for some $\theta \in (0,\pi)$ such that $|\theta-\theta_0|<\frac{\pi}{2}$. This implies that the angle coordinate of the reflected point satisfies $2\theta_0-\theta+\pi>\theta>0$. Two cases are possible: either $\theta\geq 2\theta_0$ or $\theta<2\theta_0$. In the first case, both $x$ and $\sigma_{e'}(x)$ belong to $\overline{B(e_2)}$ and, by the non decreasing monotonicity with respect to $\theta$, it follows that $u(x)\leq u(\sigma_{e'}(x))$. In the latter case, by  periodicity, symmetry and  monotonicity, we again obtain 
$$
\begin{array}{ll}
u(\sigma_{e'}(x)) & =  u(r\cos(2\theta_0-\theta+\pi), r\sin (2\theta_0-\theta+\pi), \tilde{x})\\[2ex]
 & = u(r\cos(2\theta_0-\theta-\pi), r\sin (2\theta_0-\theta-\pi), \tilde{x})\\[2ex]
& = u(r\cos(-2\theta_0+\theta+\pi), r\sin (-2\theta_0+\theta+\pi), \tilde{x})\geq u(x)\, .
\end{array}
$$
Assume now that $x\in B(e')\setminus B(e_2)$, so that $\theta \in (-\pi, 0]$. Observe that we also have $2\theta_0-\theta -\pi <\theta\leq 0$. Again, we distinguish two cases: either $\theta \leq 2\theta_0$ or $\theta>2\theta_0$. In the former case, since $u_\theta \leq 0$ in $B(-e_2)$ by symmetry and both $x$ and $\sigma_{e'}(x)$ belong to $\overline{B(-e_2)}$, we immediately obtain
$$
u(x)\leq u(r\cos(2\theta_0-\theta -\pi), r\sin (2\theta_0-\theta -\pi), \tilde{x}) =u(\sigma_{e'}(x))\, .
$$
On the other hand, if $\theta>2\theta_0$, by symmetry and monotonicity as before, we have as well
$$
\begin{array}{ll}
u(\sigma_{e'}(x)) & =  u(r\cos(2\theta_0-\theta+\pi), r\sin (2\theta_0-\theta+\pi), \tilde{x})\\[2ex]
 & = u(r\cos(-2\theta_0+\theta-\pi), r\sin (-2\theta_0+\theta-\pi), \tilde{x})\geq u(x)\, .
\end{array}
$$
Hence, the inequality $u(x)\leq u(\sigma_{e'}(x))$ is proved in all cases. The same arguments can be used to show that 
$u(x)\geq u(\sigma_{e'}(x))$ if $u_\theta\leq 0$ in $B(e_2)$ and this concludes the proof.

\hfill$\Box$

By Proposition \ref{fssc1} and Lemma \ref{diffe} (ii) we can easily deduce a first symmetry result for viscosity solutions of \refe{eq3}, which is  the fully nonlinear extension of an analogous result for semilinear elliptic equation, see Proposition 2.3 in \cite{PW}. 

 \begin{theorem}\label{simandlam}
 Let $u$ be a  viscosity solution of \refe{eq3} and assume that there exists a direction $e\in S^{n-1}$ such that $u$ is symmetric with respect to $H(e)$. If $\lambda_1^+(\mathcal{L}_u, B(e))\geq 0$, then $u$ is foliated Schwarz symmetric and if $\lambda_1^+(\mathcal{L}_u, B(e))> 0$, then $u$ is radially symmetric.
\end{theorem}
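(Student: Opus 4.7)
The strategy is to combine Lemma~\ref{diffe}(ii) with the characterization of foliated Schwarz symmetry in Proposition~\ref{fssc1}: once the symmetry of $u$ with respect to $H(e)$ is granted, it suffices to verify that, for every $e' \in S^{n-1}\setminus\{\pm e\}$, the angular derivative $v := u_{\theta_{e,e'}}$ does not change sign in the cap $B(e)$. Lemma~\ref{diffe}(ii) tells us that both $v$ and $-v$ are viscosity subsolutions of $-\mathcal{L}_u[\cdot]\leq 0$ in $B$, and therefore in $B(e)$. Choosing the angular parametrization so that $\sigma_e$ acts as $\theta \mapsto -\theta$, the hypothesis $u=u\circ\sigma_e$ makes $u$ even and $v$ odd in $\theta$, so that $v=0$ on $H(e)\cap\overline B$; on the remaining boundary piece $\partial B\cap\overline{B(e)}$, $v=0$ because $\partial_\theta$ is tangential to spheres centered at the origin and $u\equiv 0$ there. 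Hence $v=0$ on $\partial B(e)$, and the same holds for $-v$.

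For the core step, under the hypothesis $\lambda_1^+(\mathcal{L}_u,B(e))\geq 0$ the principal eigenfunction $\phi_1^+$ in $B(e)$ is a positive supersolution, since $-\mathcal{L}_u[\phi_1^+]=\lambda_1^+(\mathcal{L}_u,B(e))\,\phi_1^+\geq 0$. I would then invoke Proposition~\ref{key}, applied successively with the subsolution $v$ and with $-v$: each of them is either non-positive throughout $B(e)$, or else is a positive multiple of $\phi_1^+$, hence strictly positive throughout $B(e)$. Combining the two dichotomies rules out a sign change of $v$ in $B(e)$, and since $e'\in S^{n-1}\setminus\{\pm e\}$ was arbitrary, Proposition~\ref{fssc1} yields the foliated Schwarz symmetry of $u$.

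If in addition $\lambda_1^+(\mathcal{L}_u,B(e))>0$, the maximum principle holds in $B(e)$ by Proposition~\ref{prl}(v), so that both $v\leq 0$ and $-v\leq 0$ must hold, whence $v\equiv 0$ in $B(e)$ and, by antisymmetry across $H(e)$, in all of $B$. Since this is true for every $e' \in S^{n-1}\setminus\{\pm e\}$, $u$ is invariant under the group $G$ generated by rotations in planes containing $e$. Such rotations can bring $e$ to any prescribed unit vector, so given $x,y \in B$ with $|x|=|y|$ one finds $g_x,g_y \in G$ with $g_x e = x/|x|$ and $g_y e = y/|y|$, and then $g_y g_x^{-1} \in G$ sends $x$ to $y$; thus $G$ acts transitively on spheres centered at the origin and $u$ is radial. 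The main obstacle I foresee is the careful verification of the boundary values of $v$ on $\partial B(e)$---especially selecting the angular coordinate so that $\sigma_e$ reverses its sign---after which the rest is a transparent application of Propositions~\ref{key} and \ref{fssc1}, together with the standard maximum principle in the strict case.
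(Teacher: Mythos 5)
Your proof is correct and follows essentially the same route as the paper: apply Lemma \ref{diffe}(ii) and the symmetry of $u$ to reduce to the sign-constancy of $u_{\theta_{e,e'}}$ in $B(e)$, then conclude via Proposition \ref{key} (or the maximum principle in the strict case) and Proposition \ref{fssc1}. The only differences are cosmetic: you verify the boundary vanishing of $u_{\theta_{e,e'}}$ in more detail and apply Proposition \ref{key} to both $v$ and $-v$, where a single application already rules out a sign change; neither affects correctness.
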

\proof Let us show that, for any $e'\in S^{n-1}\setminus \{ \pm e\}$, $u_{\theta_{e,e'}}$ does not change sign in $B(e)$.  By Lemma \ref{diffe} (ii), by the boundary condition in \refe{eq3} and by the assumption of symmetry of $u$ with respect to $H(e)$,  the function $u_{\theta_{e,e'}}$  satisfies
$$
\left\{
\begin{array}{c}
-\mathcal{L}_u[ u_{\theta_{e,e'}}] \leq 0\quad \hbox{ in } B(e)\\[1ex]
u_{\theta_{e,e'}}=0 \quad \hbox{ on } \partial B(e)
\end{array}\right.
$$
Now, if $\lambda^+_1(\mathcal{L}_u, B(e))>0$, then the maximum principle holds true for operator $\mathcal{L}_u$ and we deduce $u_{\theta_{e,e'}}\leq 0$ in $B(e)$. On the other hand, if $\lambda^+_1(\mathcal{L}_u, B(e))=0$, then either $u_{\theta_{e,e'}}\leq 0$ in $B(e)$ or, by Proposition \ref{key}, $u_{\theta_{e,e'}}>0$ in $B(e)$. In any case, $u_{\theta_{e,e'}}$ does not change sign in $B(e)$. Then,  by Proposition \ref{fssc1}, $u$ is foliated Schwarz symmetric.
Moreover, if $\lambda^+_1(\mathcal{L}_u, B(e))>0$, again by Lemma \ref{diffe} (ii) and the maximum principle, we  obtain also $-u_{\theta_{e,e'}}\leq 0$ in $B(e)$. Hence, $u_{\theta_{e,e'}}\equiv 0$ in $B(e)$, and by the arbitrariness of $e'$ it follows that $u$ is radially symmetric.
 
 \hfill$\Box$

We are now ready to prove Theorem \ref{lam0}, which states that the a priori symmetry assumption on $u$ in Theorem \ref{simandlam} can be dropped, provided that $f(|x|,\cdot)$ is convex in $\R$.

 {\it Proof of Theorem \ref{lam0}}. Let $e$ be the direction for which $\lambda_1^+(\mathcal{L}_u, B(e))\geq 0$ and
let us set $w_e(x)=u(x)-u(\sigma_e(x))$. If $w_e\equiv 0$, then $u$ is symmetric with respect to $H(e)$, and we reach the conclusion by Theorem \ref{simandlam}. Therefore, we assume in the following $w_e\not\equiv 0$. By Lemma \ref{diffe} (i)   $w_e$ in particular satisfies, in the viscosity sense,
$$
\left\{
\begin{array}{c}
-\mathcal{L}_u [w_e]\leq 0 \quad \hbox{ in } B(e)\\[1ex]
w_e=0\quad \hbox{ on } \partial B(e)
\end{array}\right.
$$
Now, if $\lambda_1^+(\mathcal{L}_u, B(e))>0$, by the maximum principle one has $w_e\leq 0$ in $B(e)$ and then, by the strong maximum principle, $w_e<0$ in $B(e)$. If $\lambda_1^+(\mathcal{L}_u, B(e))=0$, then, by Proposition \ref{key}, either $w_e\leq 0$, and then again $w_e<0$ in $B(e)$, or $w_e>0$ (and $\mathcal{L}_u[w_e]=0$ in $B(e)$). Thus, in any case, we have  two possibilities: either
 $w_e<0$  or $w_e>0$  in $B(e)$. 

Next, in order to prove that $u$ is foliated Schwarz symmetric, we cannot apply directly Proposition \ref{fssc1}, since we are not able to find a fixed vector $\tilde{e}\in S^{n-1}$ such that $u$ is symmetric with respect to $H(\tilde{e})$ and for any other $e'\in S^{n-1}$, the function $u_{\theta_{\tilde{e},e'}}$ does not change sign in $B(\tilde{e})$. On the other hand, we can
 repeat the argument of the  proof of Proposition \ref{fssc1}: for any $\nu\in S^{n-1}$, in order to show that either $u(x)\geq u(\sigma_\nu (x))$ or $u(x)\leq u(\sigma_\nu(x))$ in $B(\nu)$, it is enough to show that there exists $e'\in S^{n-1}\cap \Pi(e,\nu)$ such that
 $u$ is symmetric with respect to $H(e')$ and the derivative $u_{\theta_{e,e'}}$ (or $u_{\theta_{e',\nu}}$) does not change sign in $B(e')$. Thus, the proof will be completed if we show  that for any plane $\Pi$ through $e$, there exists $e'\in S^{n-1}\cap \Pi$ such that  $u$ is symmetric with respect to $H(e')$ and the derivative $u_{\theta_{e,e'}}$ does not change sign in $B(e')$.

We first consider the case $w_e<0$ in $B(e)$. Without loss of generality, we assume that $e=(0,1, \ldots ,0)$ and that $\Pi$ is the plane spanned by $(1,0, \ldots, 0)$ and $e$. For $\theta\geq 0$, let us consider the direction $e(\theta)=(\sin \theta,  \cos \theta, 0, \ldots, 0)\in \Pi$, so that $e(0)=e$. We apply the rotating plane method in order to find $\theta'\in (0,\pi)$ such that $u$ is symmetric with respect to $H(e')$ with $e'=e(\theta')$. We set
\begin{equation}\label{tetap}
\theta' \, :=\sup \{ \tilde{\theta} \in [0,\pi)\, :\, w_{e(\theta)}< 0\ \hbox{in }\ B(e(\theta))\, ,\ \forall\, \theta\in [0, \tilde{\theta}]\}\, .
\end{equation}
We notice that $\theta'$ is well defined since $w_e<0$ in $B(e)$, and, by continuity, $w_{e(\theta')}\leq 0$ in $B(e(\theta'))$. This implies $\theta'<\pi$, since $w_{e(\pi)}=w_{-e}=-w_e\circ \sigma_e>0$ in $B(-e)$. We claim that $w_{e(\theta')}\equiv 0$, i.e. $u$ is symmetric with respect to $H(e(\theta'))$. For, assume by contradiction that $w_{e(\theta')}\not\equiv 0$, so that, by the strong maximum principle, $w_{e(\theta')}<0$ in $B(e(\theta'))$. In this case, we can find $\epsilon>0$ small enough such that the inequality $w_{e(\theta)}< 0$ in $B(e(\theta))$ holds true for all $\theta \in [0, \theta'+\epsilon)$, and this contradicts the definition of $\theta'$. Indeed, for $\epsilon$ sufficiently small, we can select a compact set $K\subset \bigcap_{\theta'\leq \theta<\theta'+\epsilon} B(e(\theta))$ such that, for all $\theta\in [\theta',\theta'+\epsilon)$ the measure of the set $B(e(\theta))\setminus K$ is so small that the operator $\mathcal{L}_u$ satisfies the maximum principle in $B(e(\theta))\setminus K$. Moreover, by assumption, there exists $\eta>0$ such that $w_{e(\theta')}\leq -\eta$ in $K$ and then, for $\epsilon$ small enough,  we have $w_{e(\theta)}\leq -\eta/2$ in $K$ for all $\theta\in [\theta', \theta'+\epsilon)$. Thus, by the maximum principle and the strong maximum principle, we have $w_{e(\theta)}< 0$ in $B(e(\theta))\setminus K$, and then $w_{e(\theta)}< 0$ in $B(e(\theta))$ for all $\theta\in [0, \theta'+\epsilon)$, in contrast with the choice of $\theta'$. 

We further observe that, by Hopf's lemma, for all $\theta \in [0, \theta')$, one has
$$
\frac{\partial}{\partial e(\theta)}w_{e(\theta)}=2 \, Du\cdot e(\theta)<0\ \hbox{ on } H(e(\theta))\cap B\, ,
$$
$e(\theta)$ being the inner unit normal vector to $B(e(\theta))$ on $\partial B(e(\theta))\cap B$. This implies that, with respect to the cylindrical coordinates $x=(r\cos \theta, -r\sin \theta, x_3,\ldots, x_n)$, one has
$$
u_\theta = \left\{ \begin{array}{ll}
-r\, Du\cdot e(\theta)>0 & \hbox{ in }\ B(e')\setminus B(e)\\[1ex]
r\, Du\cdot e(\theta)<0 & \hbox{ in }\ \overline{B(e)}\setminus \overline{B(e')}\\[1ex]
\pm r\ Du\cdot e(\theta')=0 & \hbox{ in }\ H(e')\cap B
\end{array}\right.
$$
By using also Lemma \ref{diffe} (ii), it then follows that $u_\theta$ in particular satisfies
$$
\left\{ \begin{array}{c}
-\mathcal{L}_u[-u_\theta]\leq 0 \quad \hbox{ in } B(e)\cap B(e')\\[1ex]
-u_\theta \leq 0 \quad \hbox{ on } \partial\left( B(e)\cap B(e')\right)
\end{array}\right.
$$
Since $\lambda_1^+\left(\mathcal{L}_u, B(e)\cap B(e')\right)>\lambda_1^+(\mathcal{L}_u, B(e))\geq 0$, we can apply the maximum principle, and then the strong maximum principle, in order  to deduce $-u_\theta < 0$ in 
$B(e)\cap B(e')$. Summing up, we have proved that $u_\theta >0$ in $B(e')$, and this concludes the proof in the case $w_e<0$ in $B(e)$.

On the other hand, if $w_e>0$ in $B(e)$, one has $w_{-e}<0$ in $B(-e)$ and we can apply again the rotating plane method starting with $e(0)=-e$ and considering the directions $e(\theta)=(\sin \theta, -\cos \theta, \ldots, 0)$ for $\theta\geq 0$. 
By defining $\theta'\in (0,\pi)$ as in \refe{tetap}, we find a unit vector $e'=e(\theta')\in S^{n-1}$ such that $u$ is symmetric with respect to $H(e')$ and $w_{e(\theta)}<0$ in $B(e(\theta))$ for all $\theta\in [0, \theta')$. By means of Hopf's lemma as above, we also deduce that, again with respect to the cylindrical coordinates $x=(r\cos \theta,  -r\sin \theta, x_3,\ldots, x_n)$, one has
$$
u_\theta = \left\{ \begin{array}{ll}
-r\, Du\cdot e(\theta)>0 & \hbox{ in }\ B(e')\setminus B(-e)\\[1ex]
r\, Du\cdot e(\theta)<0 & \hbox{ in }\ \overline{B(-e)}\setminus \overline{B(e')}\\[1ex]
\pm r\, Du\cdot e(\theta')=0 & \hbox{ in }\ H(e')\cap B
\end{array}\right.
$$
Then, the maximum principle applied to $u_\theta$ in $B(e)\setminus \overline{B(e')}$ yields $u_\theta<0$ in $B(-e')$, so that, by symmetry, $u_\theta>0$ in $B(e')$.

\hfill$\Box$

\begin{remark} {\rm Let us observe that the only assumption that there exists a direction $e\in S^{n-1}$ such that $\lambda_1^+(\mathcal{L}_u, B(e))>0$, i.e. the positivity of the principal eigenvalue in just one cap $B(e)$, does not imply  the radial symmetry of $u$. This is somehow in contrast with the assertion of Theorem \ref{simandlam} in the case when $\lambda_1^+(\mathcal{L}_u, B(e))> 0$; however one should note that in Theorem \ref{simandlam} the symmetry of $u$ with respect to $H(e)$ was assumed. A counterexample in the case when the symmetry assumption is dropped can be obtained by considering the least--energy (positive) solution of the semilinear problem
$$
\left\{\begin{array}{c}
-\Delta u=u^p\quad \hbox{ in } A\\[1ex]
u=0 \quad \hbox{ on } \partial A
\end{array}\right.
$$
where $A$ is an annulus in $\R^n$, $n\geq3$ and $p<\frac{n+2}{n-2}$ is close to the critical exponent $\frac{n+2}{n-2}$. It has been shown in several papers that $u$ is foliated Schwarz but not radially symmetric. On the other hand,  it is easy to see that there are directions (indeed,  infinitely many!) $e\in S^{n-1}$ such that $\lambda_1^+(\Delta +p\, |u|^{p-1}, B(e))>0$ whereas $\lambda_1^+(\Delta +p\, |u|^{p-1}, B(-e))<0$ and, obviously, $H(e)$ is not a symmetry hyperplane for $u$ (see \cite{PR} for more details).}
\end{remark}

By Theorem \ref{lam0}, at least for  convex nonlinearities $f$, the condition  
$\lambda_1^+(\mathcal{L}_u, B(e))\geq 0$ for some $e\in S^{n-1}$ is sufficient for $u$ to be 
foliated Schwarz symmetric. Concerning necessary conditions, we have the following result.

\begin{theorem}\label{lam-} Assume that problem \refe{eq3} has a solution $u$  which is not radial but foliated Schwarz  symmetric with respect to $p\in S^{n-1}$. Then, for all $e\in S^{n-1}$ such that $e\cdot p=0$, one has
$$\lambda^-_1(\mathcal{L}_u, B(e))\geq 0\, .$$
\end{theorem}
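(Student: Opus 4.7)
The plan is to produce a continuous, strictly negative function $v$ on $\overline{B(e)}$ satisfying the viscosity inequality $-\mathcal{L}_u[v]\le 0$ in $B(e)$. By the definition of $\lambda_1^-(\mathcal{L}_u,B(e))$, the existence of such $v$ (via the choice $\lambda=0$) forces $\lambda_1^-(\mathcal{L}_u,B(e))\ge 0$, which is the conclusion. Fix $e\in S^{n-1}$ with $e\cdot p=0$. Since $p\in H(e)$, the foliated Schwarz symmetry of $u$ with respect to $p$ gives $u=u\circ\sigma_e$ by Lemma \ref{fssc0}. The natural candidate is $v:=u_{\theta_{e,p}}$, the angular derivative of $u$ in the plane $\Pi(e,p)$. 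Because $u\in C^1(\overline B)$, $v$ is continuous on $\overline{B(e)}$, and by Lemma \ref{diffe}(ii) applied to the linearly independent pair $(e,p)$, $v$ is a viscosity subsolution of $-\mathcal{L}_u[v]\le 0$ in $B$, hence in $B(e)$.

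The sign $v\le 0$ on $B(e)$ would come directly from the foliated Schwarz symmetry: following the argument in the forward direction of the proof of Proposition \ref{fssc1} with $e'=p$ (so that $e'\cdot p=1>0$), the characterization of Lemma \ref{fssc0} yields $u_{\theta_{e,p}}\le 0$ throughout $B(e)$, with a suitable orientation of the polar angle.

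The main obstacle, and the delicate step, is strengthening $v\le 0$ to $v<0$. Since $v$ is a viscosity subsolution of $-\mathcal{L}_u[v]\le 0$ with the bounded zero-order coefficient $f'(|x|,u)$ and $v\le 0$ in $B(e)$, the strong maximum principle for $\Mp$ yields the dichotomy $v<0$ in $B(e)$ or $v\equiv 0$ in $B(e)$. I would rule out the second alternative by using the hypothesis that $u$ is not radial: the derivative $v=u_{\theta_{e,p}}$ is antisymmetric under $\sigma_e$ (reflection across $H(e)$ reverses the orientation of the polar angle $\theta_{e,p}$), so $v\equiv 0$ in $B(e)$ would extend to $v\equiv 0$ in all of $B$. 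Decomposing a generic point as $x=r_1 e + r_2 p + \eta$ with $\eta\in\Pi(e,p)^\perp$, the condition $v\equiv 0$ says $u(x)$ is invariant under rotations of the pair $(r_1,r_2)$ at fixed $|x|$ and $\eta$; but such rotations vary $x\cdot p=r_2$, and the foliated Schwarz representation $u(x)=U(|x|,\arccos(x\cdot p/|x|))$ then forces $U(|x|,\cdot)$ to be constant in its second argument, i.e.~$u$ is radial, contradicting the hypothesis. Thus $v<0$ in $B(e)$, and the argument concludes.
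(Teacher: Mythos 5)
Your proof is correct and follows essentially the same route as the paper's: take $v=u_{\theta_{e,p}}$, invoke Lemma \ref{diffe}(ii) for the subsolution property, Proposition \ref{fssc1} for the sign $v\le 0$ in $B(e)$, then the strong maximum principle dichotomy, ruling out $v\equiv 0$ by the non-radiality of $u$, and finally the definition of $\lambda_1^-$. You merely spell out in more detail (antisymmetry of $v$ under $\sigma_e$, the cylindrical-coordinate computation) the step the paper states in one line, namely that $v\equiv 0$ together with foliated Schwarz symmetry would force $u$ to be radial.
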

\proof For $e\in S^{n-1}$ orthogonal to $p$, let us denote  by $\theta$  the polar angle coordinate $\theta_{p,e}$ defined as  the angle formed by $p$ and the orthogonal projection of $x$ in the plane $\Pi(p,e)$. By Proposition \ref{fssc1} and by Lemma \ref{diffe} (ii), $u_\theta$ satisfies
$$
\left\{ \begin{array}{c}
-\mathcal{L}_u[u_\theta]\leq 0\, ,\ \quad \hbox{ in } B(e)\\[1ex]
u_\theta\leq 0 \ \hbox{ in } B(e)\, ,\ u_\theta=0 \quad \hbox{ on } \partial B(e)
\end{array}\right.
$$
The strong maximum principle implies that either $u_\theta <0$ or $u_\theta\equiv 0$ in $B(e)$. Since $u$ is not radially symmetric, we deduce $u_\theta<0$ in $B(e)$ and therefore, by its very definition, $\lambda_1^-(\mathcal{L}_u, B(e))\geq 0$. 

\hfill$\Box$

\begin{remark}\label{necelam+}
{\rm We notice that, if $u$ is not radial but foliated Schwarz symmetric with respect to $p$, then, for any $e\in S^{n-1}$ such that $e\cdot p=0$, we have  $\lambda_1^+(\mathcal{L}_u, B(e))\leq 0$ by Theorem \ref{simandlam}. Thus,  in the semilinear case for which $\alpha=\beta$ and $\lambda_1^+(\mathcal{L}_u, B(e))=\lambda_1^-(\mathcal{L}_u, B(e))$, Theorem \ref{lam-} yields that if $u$ is a not radial foliated Schwarz symmetric solution, then necessarily 
$$
\lambda_1(\Delta +f'(|x|,u), B(e))=0
$$
for all $e\in S^{n-1}$ orthogonal to the symmetry axes of $u$.}
\end{remark}

\section{Applications and spectral properties.}\label{sec4}
The main symmetry result of Theorem  \ref{lam0} was based on the assumption that there exists some direction $e\in S^{n-1}$ such that $\lambda_1^+(\lpu,B(e))\geq 0$. We wish to comment on this eigenvalue and  its role in providing bounds for the eigenvalues of the  operator $\Mp$.

Let us start by recalling that in the introduction, we introduced  the value
$\mu_2^+=\mu^+_2(\lpu,\Omega)$ defined in \refe{mu} for any bounded domain, and we showed that its
non negativity, when $\Omega=B$ is a radial domain, easily  implies, by Theorem \ref{lam0},  that $u$ is foliated Schwarz symmetric. It would be very interesting to study the sign of $\mu^+_2$ for positive solutions of \refe{eq3}, in particular for those found in \cite{QS}.

Let us observe that when $\alpha=\beta$, i.e. when $F$ is the Laplace operator, $\mu^+_2$ is the second  eigenvalue of $\lpu$, hence the inequality $\mu_2^+\geq 0$ just means that $u$ has Morse index less than or equal to one. On the contrary,
in the fully nonlinear case  the following proposition holds.
\begin{proposition}\label{secn}
If $\alpha<\beta$,  then $\mu^+_2(\lpu,\Omega)$ is not an eigenvalue for $\lpu$ in $\Omega$ with  corresponding sign changing eigenfunctions having exactly two nodal regions.
\end{proposition}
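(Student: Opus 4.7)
The plan is to argue by contradiction. Suppose $\mu := \mu_2^+(\lpu,\Omega)$ is an eigenvalue with a sign-changing eigenfunction $\phi$ whose nodal regions are exactly $\Omega^+=\{\phi>0\}$ and $\Omega^-=\{\phi<0\}$. The strategy is to identify $\mu$ with two different principal eigenvalues on these regions, exploit the strict separation $\lambda_1^+<\lambda_1^-$ afforded by $\alpha<\beta$, and then produce a competing subdomain violating the infimum in \refe{mu}.

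First I would restrict the eigenvalue equation $-\lpu[\phi]=\mu\phi$ to each nodal region. On $\Omega^+$, the function $\phi$ is positive and vanishes on $\partial\Omega^+$, so it is a positive eigenfunction of $\lpu$ in $\Omega^+$ for the eigenvalue $\mu$. By Proposition \ref{prl} (iv), non-sign-changing eigenfunctions can correspond only to $\lambda_1^+$ or $\lambda_1^-$, and since $\phi>0$ this forces $\mu=\lambda_1^+(\lpu,\Omega^+)$. The symmetric argument on $\Omega^-$, where $\phi<0$, yields $\mu=\lambda_1^-(\lpu,\Omega^-)$. Invoking Proposition \ref{prl} (iii), which uses the hypothesis $\alpha<\beta$, gives the strict inequality
$$\lambda_1^+(\lpu,\Omega^-)<\lambda_1^-(\lpu,\Omega^-)=\mu.$$

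Next I would construct a better competitor than $D=\Omega^+$ in the definition of $\mu$. Fix a point $x_0\in\partial\Omega^+\cap\Omega$ and for small $\epsilon>0$ set $D_\epsilon=\Omega^+\cup B_\epsilon(x_0)$. Since $D_\epsilon\supsetneq\Omega^+$, the strict monotonicity in Proposition \ref{prl} (i) yields
$$\lambda_1^+(\lpu,D_\epsilon)<\lambda_1^+(\lpu,\Omega^+)=\mu.$$
On the other side, the open sets $\Omega\setminus\overline{D_\epsilon}$ form an increasing family as $\epsilon\downarrow 0$ whose union, since $\phi$ has only the two nodal regions $\Omega^\pm$, coincides with $\Omega^-$ (up to a set of empty interior). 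Proposition \ref{prl} (ii) then gives
$$\lim_{\epsilon\to 0^+}\lambda_1^+(\lpu,\Omega\setminus\overline{D_\epsilon})=\lambda_1^+(\lpu,\Omega^-)<\mu.$$
Choosing $\epsilon$ small enough so that both quantities lie strictly below $\mu$ produces
$$\max\bigl\{\lambda_1^+(\lpu,D_\epsilon),\,\lambda_1^+(\lpu,\Omega\setminus\overline{D_\epsilon})\bigr\}<\mu,$$
contradicting the fact that $\mu$ is the infimum in \refe{mu}.

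The main obstacle is the limit statement for the complementary domains: one must be sure that the increasing union of the $\Omega\setminus\overline{D_\epsilon}$ really exhausts $\Omega^-$ in the sense required by Proposition \ref{prl} (ii), so that the limiting eigenvalue is indeed $\lambda_1^+(\lpu,\Omega^-)$. The two-nodal-region hypothesis is precisely what guarantees that $\Omega^+\cup\Omega^-$ is dense in $\Omega$ and that removing a small ball near $\partial\Omega^+$ from $\Omega$ leaves a domain contained in $\Omega^-$ which approximates it monotonically; apart from this approximation issue, every other ingredient in the proof is a direct appeal to the four properties of principal eigenvalues recalled in Proposition \ref{prl}.
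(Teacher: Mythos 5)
Your proof is correct and follows essentially the same route as the paper: identify $\mu_2^+$ with $\lambda_1^+(\lpu,\Omega^+)=\lambda_1^-(\lpu,\Omega^-)$, use $\alpha<\beta$ to get the strict gap $\lambda_1^+(\lpu,\Omega^-)<\lambda_1^-(\lpu,\Omega^-)$, and then enlarge $\Omega^+$ slightly to a competitor $D$ for which both $\lambda_1^+(\lpu,D)$ and $\lambda_1^+(\lpu,\Omega\setminus\overline D)$ lie strictly below $\mu_2^+$. The only difference is cosmetic: you make the competitor explicit ($D_\epsilon=\Omega^+\cup B_\epsilon(x_0)$) and invoke Proposition \ref{prl}~(ii) directly for the limit, whereas the paper leaves the choice of $D$ ``sufficiently close to $\Omega^+$'' and packages the same information into one inequality chain.
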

\begin{proof} 
Suppose by contradiction that $\mu_2^+=\mu^+_2(\lpu,\Omega)$ is such an eigenvalue.
Hence,    there exists  a sign changing function $\psi$ solution of
$$\left\{\begin{array}{c}
\Mp(D^2 \psi)+(f^\prime(|x|,u)+\mu^+_2)\psi=0 \quad \mbox{in}\ \Omega\\[1ex]
\psi=0\quad \mbox{on}\ \partial \Omega\, ,
\end{array}
\right.
$$
such that  $\Omega^{-}=\{x\in \Omega\, :\,  \psi(x)<0\}$ and $\Omega^{+}=\{x\in \Omega\, :\,  \psi(x)>0\}=\Omega\setminus \overline{\Omega^-}$ are subdomains of $\Omega$. Since $\alpha<\beta$, Proposition \ref{prl}  yields
$$\lambda_1^+(\lpu,\Omega^-)<\lambda_1^-(\lpu,\Omega^-)=\mu^+_2=\lambda_1^+(\lpu,\Omega^+).$$
By these inequalities and using again Proposition \ref{prl}, one can choose $D$ containing $\Omega^+$ but sufficiently close to it so that 
$$\lambda_1^+(\lpu,\Omega^-)<\lambda_1^+(\lpu,\Omega\setminus \overline D)\leq\lambda_1^+(\lpu,D)<\lambda_1^+(\lpu,\Omega^+)=\mu^+_2\, ,$$
and this contradicts the fact that by the definition (\ref{mu}) we  have $\mu^+_2\leq \lambda_1^+(\lpu,D)$.
\end{proof}

\begin{remark} {\rm The proof of Proposition \ref{secn} leads to believe that  a natural candidate for being the second eigenvalue of $\lpu$  should be
$$
\gamma^+_2(\lpu,B)=\inf_{D\subset B}\max\left\{\lambda_1^+(\lpu,D),\lambda_1^-(\lpu,B\setminus \overline D)\right\}\geq \mu_2^+(\lpu, B)\, .
$$
It would be also interesting to know whether  the non negativity of $\gamma^+_2(\lpu,B)$ would imply that $u$ is foliated Schwarz symmetric.}
\end{remark}
\smallskip

Let us remark that Proposition \ref{secn} holds for any operator of the form $\mathcal{L}[\varphi]=\Mp(D^2\varphi)+c(x)\varphi$. For simplicity we will, from now on, suppose that $c(x)=0$, i.e. we concentrate on the Pucci operator $\Mp$.

Let us   now state a few results related to eigenvalues higher than the principal ones that can be deduced as consequences of the symmetry result of Theorem \ref{lam0}. We wish to emphasise that in \cite{A}, Armstrong  defined 
$$\Lambda_2=\inf\{\lambda>\lambda_1^- \, :\, \lambda\ \mbox{ is an eigenvalue of $\Mp$}\ \}.$$
He then proved that $\Lambda_2>\lambda_1^-$ and  that for any $\mu\in (\lambda_1^-,\Lambda_2)$ and for any 
continuous $f$  there exists a solution of the Dirichlet problem
$$\left\{\begin{array}{lc}
\Mp(D^2 u)+\mu u=f & \mbox{in}\ B\\
u=0& \mbox{on}\ \partial B.
\end{array}
\right.
$$
Hence the importance of any estimate on $\Lambda_2$.

Let us  call \emph{nodal eigenvalues}  the eigenvalues that are not the principal ones,  since $\lambda_1^+$ and $\lambda_1^-$ are the only ones having  eigenfunctions that do not change sign.

For simplicity,  let us also denote by $\tilde{\lambda}_1^+=\lambda_1^+(\Mp,B(e))$, i.e. the principal eigenvalue in any half domain $B(e)$ (since it clearly does not depend on $e$), and by $\lambda_2^r$ the smallest radial nodal eigenvalue in $B$. 

\begin{theorem}\label{lame}
The following inequalities hold
$$\la_2^r>\tilde{\lambda}_1^+\quad \hbox{and}\quad  \Lambda_2 \geq \tilde{\lambda}_1^+\, .$$
\end{theorem}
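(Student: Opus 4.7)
My plan is to reduce both inequalities to consequences of the symmetry machinery already developed, by exploiting the elementary observation that for any eigenfunction $\varphi$ of $\Mp$ with eigenvalue $\lambda$, the ``linearized operator'' $\mathcal{L}_\varphi$ is just the scalar shift $\Mp + \lambda$. Indeed, $\varphi$ is a viscosity solution of \refe{eq2} with $F=\Mp$ and $f(s)=\lambda s$, so $f'(u)\equiv \lambda$. In particular, for every Lipschitz subdomain $D\subset B$,
$$
\lambda_1^+(\mathcal{L}_\varphi, D)=\lambda_1^+(\Mp, D)-\lambda\, ,
$$
and for any half--domain $D=B(e)$ the right--hand side equals $\tilde\lambda_1^+-\lambda$, independently of $e$ (by the rotational invariance of $\Mp$).

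For the first inequality, let $\varphi$ be a radial sign--changing eigenfunction associated with $\lambda_2^r$. Since $\Mp$ satisfies \refe{ue}, \refe{lip} and \refe{ri}, Corollary \ref{rad-} applies and gives $\lambda_1^+(\mathcal{L}_\varphi, B(e))<0$. Combined with the identity above, this yields $\tilde\lambda_1^+-\lambda_2^r<0$, i.e. $\lambda_2^r>\tilde\lambda_1^+$.

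For the second inequality, I argue by contradiction: suppose some eigenvalue $\lambda$ of $\Mp$ satisfies $\lambda_1^-<\lambda<\tilde\lambda_1^+$, with associated eigenfunction $\varphi$. Since $\lambda\neq \lambda_1^\pm$ (recall $\lambda_1^+<\lambda_1^-$ when $\alpha<\beta$, while $\alpha=\beta$ collapses everything to the linear case), Proposition \ref{prl}(iv) forces $\varphi$ to change sign. On the other hand, the strict inequality $\lambda<\tilde\lambda_1^+$ gives $\lambda_1^+(\mathcal{L}_\varphi, B(e))=\tilde\lambda_1^+-\lambda>0$ for every $e\in S^{n-1}$. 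Since $f(s)=\lambda s$ is (trivially) convex and $\Mp$ is invariant under rotations and reflections, Theorem \ref{lam0} implies that $\varphi$ is foliated Schwarz symmetric. Two cases arise. If $\varphi$ is radial, Corollary \ref{rad-} gives $\lambda_1^+(\mathcal{L}_\varphi, B(e))<0$, contradicting the positivity above. If $\varphi$ is foliated Schwarz symmetric but not radial, then it is symmetric with respect to $H(e)$ for every $e$ orthogonal to its symmetry axis; Theorem \ref{simandlam} applied at such an $e$, together with $\lambda_1^+(\mathcal{L}_\varphi, B(e))>0$, forces $\varphi$ to be radial, a contradiction. Hence no eigenvalue lies in $(\lambda_1^-,\tilde\lambda_1^+)$, proving $\Lambda_2\geq \tilde\lambda_1^+$.

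I do not anticipate any serious obstacle: the argument is essentially a bookkeeping chaining of Theorem \ref{lam0}, Theorem \ref{simandlam} and Corollary \ref{rad-}. The one conceptual input — and the step I would double check carefully — is that on a pure eigenvalue equation the linearized operator degenerates to a constant--coefficient shift of $\Mp$, so that its half--ball principal eigenvalue is directly computable and independent of the direction $e$; everything else is a standard alternative between ``radial'' and ``genuine foliated Schwarz''.
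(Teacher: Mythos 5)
Your proof is correct, and for the first inequality it is exactly the paper's argument: the linearization of the eigenvalue equation is the constant shift $\Mp+\lambda$, so $\lambda_1^+(\mathcal{L}_\varphi,B(e))=\tilde\lambda_1^+-\lambda$, and Corollary \ref{rad-} forces this to be negative for a radial nodal eigenfunction. For the second inequality you take a slight detour: from $\lambda_1^+(\mathcal{L}_\psi,B(e))>0$ you first invoke Theorem \ref{lam0} to obtain foliated Schwarz symmetry and then split into the radial case (contradiction via Corollary \ref{rad-}) and the non-radial case (contradiction via Theorem \ref{simandlam} with $e$ orthogonal to the symmetry axis). The paper is more economical here: since $\lambda_1^+(\mathcal{L}_\psi,B(\pm e))=\tilde\lambda_1^+-\lambda>0$ for \emph{every} $e$, Proposition \ref{prop1}(i) applied to each hyperplane $H(e)$ immediately gives symmetry with respect to all $H(e)$, hence radiality, and then the first inequality (equivalently \refe{y}) yields the contradiction directly. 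Both routes are driven by the same observation — positive half-cap eigenvalue of the shifted Pucci operator forces symmetry — so the arguments are essentially equivalent, with the paper's version skipping the alternative and the appeal to the heavier Theorem \ref{lam0}.
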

\proof Remark first  that Corollary \ref{rad-} implies that if $\la$ is any nodal radial eigenvalue, then, for any $e\in S^{n-1}$,
\begin{equation}\label{y}
\lambda_1^+(\Mp+\lambda, B(e))< 0\, .
\end{equation}
 But
$$\lambda_1^+(\Mp+\lambda, B(e))=\tilde{\la}^+_1-\la\, ,$$
so that the first inequality of the statement follows.

Next, in order to prove the second inequality, suppose by contradiction that for some 
$
\lambda<\tilde{\lambda}_1^+
$
there exists $\psi\neq0$ sign changing solution of
$$\left\{\begin{array}{lc}
\Mp(D^2\psi)+\lambda \psi=0 & \mbox{in}\ B\\
\psi=0 & \mbox{on}\ \partial B.
\end{array}
\right.
$$
Then
$$\lambda_1^+(\mathcal{L}_\psi, B(e))=\lambda_1^+(\Mp+\lambda, B(e)))=\tilde{\la}^+_1-\la>0\, .
$$
By Proposition  \ref{prop1} it follows that $\psi$ is radially symmetric and then (\ref{y}) holds true, a contradiction. 

\hfill$\Box$

Let us observe that if it happens that $\lambda_1^-$ is larger than $\tilde{\lambda}_1^+$, then the estimate $\Lambda_2\geq \tilde{\lambda}_1^+$ provided by Theorem \ref{lame} is not relevant. However, when the ellipticity constants $\alpha$ and $\beta$ are sufficiently close to each other, this is not the case. It would be interesting to estimate the gap $\la_1^--\la_1^+$ in dependence of $\alpha$ and $\beta$ and the relation between $\la_1^-$ and $\tilde{\lambda}_1^+$.

In the two dimensional case, Theorem \ref{lame} can be extended to a larger class of domains, precisely to domains $\Omega$ which are symmetric and convex with respect to two orthogonal directions, say $e_1=(1,0)$ and $e_2=(0,1)$, i.e. the same kind of domains considered in Section 2.
Following the same notation,  we consider the eigenvalues $\la^+_1(\Mp,\Omega(e_1))$ and  
$\la^+_1(\Mp,\Omega(e_2))$.

By using Proposition \ref{prop1} and Corollary \ref{ds1}, the analogous result to Theorem \ref{lame} is
\begin{theorem} Let $\Omega$ be as in  Lemma \ref{ds} and 
let $\la$ be a nodal eigenvalue for $\Mp$ in $\Omega$ associated with an eigenfunction $\psi$ having two nodal regions.  Then:
\begin{itemize}
\item[(i)] $\la\geq \min\left\{ \la^+_1(\Mp,\Omega(e_1)), \la^+_1(\Mp,\Omega(e_2)) \right\}\, ;$
\smallskip

\item[(ii)] if $\psi$ is  doubly symmetric, then 
$$\la>\max\left\{ \la^+_1(\Mp,\Omega(e_1)), \la^+_1(\Mp,\Omega(e_2)) \right\}\, .$$
\end{itemize}
\end{theorem}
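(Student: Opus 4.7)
The plan is to recast the eigenfunction equation $\Mp(D^2\psi)+\la\psi=0$ in the form of \refe{eq2} with $F(x,M)=\Mp(M)$ and $f(s)=\la s$. The operator $\Mp$ depends only on the eigenvalues of $M$, so it is invariant under every reflection \refe{sime} and every rotation \refe{ri}; the nonlinearity $f$ is $C^1$ and linear, in particular convex. Hence the hypotheses of both Proposition \ref{prop1} and Corollary \ref{ds1} are satisfied, and the linearized operator at $\psi$ is $\lpu=\Mp+\la$. I would first record the shift identity
\[ \la_1^+(\lpu,D)=\la_1^+(\Mp,D)-\la \]
for every subdomain $D\subset \Omega$, which is immediate from the eigenfunction equation \refe{eigenf+}. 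The $C^1(\overline\Omega)$ regularity of $\psi$, required to invoke Proposition \ref{prop1} and Corollary \ref{ds1}, is provided by standard regularity for fully nonlinear uniformly elliptic equations.

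Statement (ii) is then essentially a citation. Since $\psi$ is doubly symmetric, sign changing, and has exactly two nodal regions, Corollary \ref{ds1} applies and gives $\la_1^+(\lpu,\Omega(\pm e_i))<0$ for $i=1,2$. Via the shift identity this rewrites as $\la>\la_1^+(\Mp,\Omega(e_i))$ for each $i$, so $\la>\max\{\la_1^+(\Mp,\Omega(e_1)),\la_1^+(\Mp,\Omega(e_2))\}$.

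For (i) I would argue by contradiction. Assume $\la<\min\{\la_1^+(\Mp,\Omega(e_1)),\la_1^+(\Mp,\Omega(e_2))\}$. Using the shift identity together with the observation that $\Omega(-e_i)=\sigma_{e_i}(\Omega(e_i))$ and that $\Mp$ is reflection invariant (so $\la_1^+(\Mp,\Omega(-e_i))=\la_1^+(\Mp,\Omega(e_i))$), one obtains $\la_1^+(\lpu,\Omega(\pm e_i))>0$ for $i=1,2$. Applying Proposition \ref{prop1}(i) successively with $e=e_1$ and with $e=e_2$ forces $\psi$ to be symmetric with respect to both hyperplanes $H(e_1)$ and $H(e_2)$, so $\psi$ is doubly symmetric. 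But then the conclusion of (ii) applies and yields $\la>\la_1^+(\Mp,\Omega(e_i))$, contradicting the standing assumption.

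No step here is genuinely hard; the argument is a repackaging of Proposition \ref{prop1}, Lemma \ref{ds} and Corollary \ref{ds1}. The only point that deserves attention is the bookkeeping: (a) translating between eigenvalues of $\Mp$ and of the shifted operator $\lpu=\Mp+\la$, and (b) using the reflection invariance of $\Omega$ and $\Mp$ to promote positivity of $\la_1^+(\lpu,\Omega(e_i))$ to positivity in both caps $\Omega(\pm e_i)$, which is the exact hypothesis needed by Proposition \ref{prop1}(i) to reduce (i) to (ii).
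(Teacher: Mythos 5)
Your proposal is correct and follows precisely the route the paper indicates when it writes that ``the proof proceeds as the one of Theorem \ref{lame}'': the shift identity $\la_1^+(\lpu,D)=\la_1^+(\Mp,D)-\la$, Corollary \ref{ds1} for the strict bound in (ii), and Proposition \ref{prop1} by contradiction (together with the reflection invariance of $\Mp$ and of the doubly symmetric domain, which promotes $\la_1^+(\lpu,\Omega(e_i))>0$ to both caps $\Omega(\pm e_i)$) to reduce (i) to (ii). The only place where you are slightly more explicit than the paper is the bookkeeping step that passes from the positivity of the principal eigenvalue in one cap to positivity in both, which is exactly the hypothesis Proposition \ref{prop1}(i) needs and is worth spelling out.
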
 
The proof proceeds  as the one of Theorem \ref{lame}.

\medskip

To conclude, we observe that a question which remains open is whether $\tilde{\la}_1^+$ is a nodal eigenvalue for $\Mp$ in $B$,  as for the laplacian, or not. Note that if this was the case,  then, by Theorem \refe{lame},  $\tilde{\lambda}_1^+$ would be the smallest nodal eigenvalue of $\Mp$ in $B$.
 Next we  describe some qualitative  
properties that a  corresponding eigenfunction should have.

\begin{proposition}\label{pis}
Assume that $\tilde{\la}_1^+$ is a nodal eigenvalue for $\Mp$ in $B$ and that $\psi_2$ is a corresponding eigenfunction, i.e.
$$\left\{\begin{array}{c}
\Mp(D^2\psi_2)+\tilde{\la}_1^+ \psi_2=0 \quad \mbox{in}\ B\\[1ex]
\psi_2=0 \quad \mbox{on}\ \partial B
\end{array}
\right.
$$
Then
\begin{itemize}
\item[(i)] $\psi_2$ is not radial;
\item[(ii)] $\psi_2$ is foliated Schwarz symmetric;
\item[(iii)]  the nodal set of $\mathcal{N}(\psi_2)$ does intersect the boundary;
\item[(iv)] if $\alpha<\beta$, then, for any $e\in S^{n-1}$, $B^+:=\{x\in B\, :\,  \psi_2>0\}\neq B(e)$. 
\end{itemize}
\end{proposition}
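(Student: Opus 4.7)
The plan is to view $\psi_2$ as a viscosity solution of problem \refe{eq3} with the linear (hence convex) nonlinearity $f(|x|,u)=\tilde{\la}_1^+\, u$. With this identification the associated linearized operator is $\mathcal{L}_{\psi_2}=\Mp+\tilde{\la}_1^+$, and for every $e\in S^{n-1}$
$$
\la_1^+(\mathcal{L}_{\psi_2},B(e))=\la_1^+(\Mp,B(e))-\tilde{\la}_1^+=\tilde{\la}_1^+-\tilde{\la}_1^+=0\, .
$$
This observation drives all four statements.

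For part (i), were $\psi_2$ radial, then $\tilde{\la}_1^+$ would be a radial nodal eigenvalue of $\Mp$ in $B$, hence $\tilde{\la}_1^+\geq\la_2^r$, contradicting the strict inequality $\la_2^r>\tilde{\la}_1^+$ given by Theorem \ref{lame}. For part (ii), the convexity of $f$ together with the above vanishing of $\la_1^+(\mathcal{L}_{\psi_2},B(e))$ allows a direct application of Theorem \ref{lam0} to conclude foliated Schwarz symmetry. For part (iii), let $p$ be the symmetry axis provided by (ii) and pick any $e\in S^{n-1}$ with $e\perp p$; by Definition \ref{fss}, $\psi_2$ is symmetric with respect to $H(e)$, so $\la_1^+(\mathcal{L}_{\psi_2},B(\pm e))=0\geq 0$ and Proposition \ref{prop2} (together with its annular adaptation referred to in the discussion preceding Corollary \ref{rad-}) forces $\mathcal{N}(\psi_2)\cap\partial B\neq\emptyset$.

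For part (iv), assume by contradiction that $B^+=B(e)$ for some $e\in S^{n-1}$. Then $\psi_2\leq 0$ in $B(-e)$ and $\psi_2=0$ on $\partial B(-e)$. The function $\phi=-\psi_2\geq 0$ satisfies $-\Mm(D^2\phi)=\tilde{\la}_1^+\phi$ in $B(-e)$, and the strong maximum principle rules out $\phi\equiv 0$ (otherwise $\psi_2\geq 0$ throughout $B$, contradicting the sign--changing hypothesis), leaving $\psi_2<0$ strictly in $B(-e)$. The equation then identifies
$$
\tilde{\la}_1^+=\la_1^+(\Mp,B(e))\quad\text{and}\quad \tilde{\la}_1^+=\la_1^-(\Mp,B(-e))\, .
$$
Reflection invariance of $\Mp$ gives $\la_1^+(\Mp,B(e))=\la_1^+(\Mp,B(-e))$, so combining we obtain $\la_1^+(\Mp,B(-e))=\la_1^-(\Mp,B(-e))$, contradicting Proposition \ref{prl}(iii) since $\alpha<\beta$.

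The most delicate point appears to be (iii) in the case when $B$ is an annulus: Proposition \ref{prop2} as stated requires convexity of the domain in the symmetry direction, and the annular extension alluded to in the paper is spelled out only for radial solutions; one would have to check that the sign argument for $\partial\psi_2/\partial x_1$ on $\partial B\cap\partial B(e)$ (cf.\ \cite{AP}) still runs for the foliated Schwarz symmetric but non--radial function $\psi_2$. The remaining three parts reduce to routine manipulations with the spectral tools collected in Section~2 and the strict gap $\la_1^+<\la_1^-$ supplied by the ellipticity condition $\alpha<\beta$.
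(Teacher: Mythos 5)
Your proof is correct and follows essentially the same route as the paper's: part (i) from the strict inequality $\la_2^r>\tilde{\la}_1^+$ in Theorem \ref{lame}, part (ii) from Theorem \ref{lam0} applied with the convex nonlinearity $f(|x|,s)=\tilde{\la}_1^+ s$, part (iii) from (ii) together with Proposition \ref{prop2}, and part (iv) from the identification $\tilde{\la}_1^+=\la_1^-(\Mp,B(-e))$ and the strict gap $\la_1^+<\la_1^-$ of Proposition \ref{prl}(iii). You are somewhat more explicit than the paper in spelling out $\la_1^+(\mathcal{L}_{\psi_2},B(e))=0$ and in the Pucci-operator sign computation for (iv), and your observation that the annular variant of Proposition \ref{prop2} in the text is formally stated only in a context of radial solutions is a fair reading of the paper's exposition, though it is the paper's gap rather than yours.
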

\proof (i) is just the first inequality in Theorem \ref{lame}, and (ii) follows directly from  Theorem \ref{lam0}. Then, (ii) and  Proposition \ref{prop2} yield (iii).

Finally, in order to prove (iv), suppose by contradiction that, for some $e$, $B(e)=B^+$. This implies that $B^-=B(-e)$. Hence, $\la_1^-(\Mp,B(-e))=\la_1^+(\Mp,B(e))=\tilde{\la}^+_1$. On the other hand, the symmetry of the domain implies
$\la_1^-(B(-e))=\la_1^-(B(e))>\la_1^+(B(e))$, if $\alpha<\beta$. The contradiction proves the claim.

\hfill$\Box$

Let us denote by $\psi_1^+$ a positive eigenfunction in  $B(e)$ corresponding to $\tilde{\la}_1^+$. Then,  statement (iv) of Proposition \ref{pis} implies that $\psi_1$, the sign changing function constructed by odd reflection of $\psi_1^+$, is not an eigenfunction for $\Mp$  provided that $\alpha<\beta$, contrarily to the case when $\alpha=\beta$. The same argument shows that, if $\alpha<\beta$, then $\Mp$ cannot have a nodal eigenfunction antisymmetric with respect to $H(e)$ for some $e\in S^{n-1}$ and such that $B^+=B(e)$.

\end{document}